\documentclass[11pt]{amsart}

\pdfoutput=1
\usepackage[latin1]{inputenc}
\usepackage{subfigure, graphicx,  shuffle}
\usepackage{amssymb, amsmath}

\usepackage[colorlinks=true]{hyperref}

\numberwithin{equation}{section}
\newtheorem{theorem}{Theorem}

\newtheorem{lemma}[theorem]{Lemma}

\newtheorem{problem}[theorem]{Problem}

\newtheorem{defn}[theorem]{Definition}

\newtheorem{example}[theorem]{Example}

\newcommand{\Park}{\mathrm{Park}}
\newcommand{\CC}{\mathbb{C}}
\newcommand{\ZZ}{\mathbb{Z}}

\newcommand{\Frob}{\mathrm{Frob}}
\newcommand{\area}{\mathrm{area}}
\newcommand{\sign}{\mathrm{sign}}
\newcommand{\grFrob}{\mathrm{grFrob}}
\newcommand{\ex}{\mathrm{ex}}
\newcommand{\Res}{\mathrm{Res}}
\newcommand{\Ind}{\mathrm{Ind}}
\newcommand{\Lie}{\mathrm{Lie}}
\newcommand{\Sym}{\mathrm{Sym}}
\newcommand{\mult}{\mathrm{mult}}
\newcommand{\spn}{\mathrm{span}}

\title{Extending the parking space}

\author{Andrew Berget}
\address{Department of Mathematics\\University of Washington\\Seattle, WA, USA}
\email{aberget@math.washington.edu}

\author{Brendon Rhoades}
\address{Department of Mathematics\\University of California, San Diego\\La Jolla, CA, USA}
\email{bprhoades@math.ucsd.edu}

\keywords{parking functions, symmetric group, Dyck paths,
  representation, matriod}

\begin{document}
\maketitle

\begin{abstract}
  The action of the symmetric group $S_n$ on the set $\Park_n$ of
  parking functions of size $n$ has received a great deal of attention
  in algebraic combinatorics.  We prove that the action of $S_n$ on
  $\Park_n$ extends to an action of $S_{n+1}$.  More precisely, we
  construct a graded $S_{n+1}$-module $V_n$ such that the restriction
  of $V_n$ to $S_n$ is isomorphic to $\Park_n$.  We describe the
  $S_n$-Frobenius characters of the module $V_n$ in all degrees and
  describe the $S_{n+1}$-Frobenius characters of $V_n$ in extreme
  degrees.  We give a bivariate generalization $V_n^{(\ell, m)}$ of
  our module $V_n$ whose representation theory is governed by a
  bivariate generalization of Dyck paths.  A Fuss generalization of
  our results is a special case of this bivariate generalization.
\end{abstract}

\section{Introduction}
\label{sec:in}
This paper is about extending the visible permutation action of $S_n$
on the space $\Park_n$ spanned by parking functions of size $n$ to a
hidden action of the larger symmetric group $S_{n+1}$.  The
$S_{n+1}$-module we construct will be a subspace of the coordinate
ring of the reflection representation of type A$_n$ and will inherit
the polynomial grading of this coordinate ring.  Using statistics on
Dyck paths, Theorem~\ref{maintheorem} will give an explicit
combinatorial formula for the graded $S_n$-Frobenius character of our
module. In Theorem~\ref{extremedegrees} we will describe the extended
$S_{n+1}$ action in extreme degrees.

As far as the authors know, this is the first example of an extension
of the $S_n$-module structure on $\Park_n$ to $S_{n+1}$ and the first
proof that the $S_n$-module structure on $\Park_n$ extends to
$S_{n+1}$. Our main theorems should be thought of as parallel to
several well known extensions, most notably the action of $S_{n+1}$ on
the multlinear subspace of the free Lie algebra on $n+1$ symbols,
which extends the regular representation of $S_n$ to $S_{n+1}$. See
Section~\ref{Concluding Remarks} for more on such questions.

\section{Background and Main Results}
\label{sec:main}

A length $n$ sequence $(a_1, \dots, a_n)$ of positive integers is
called a {\it parking function of size $n$} if its nondecreasing
rearrangement $(b_1 \leq \dots \leq b_n)$ satisfies $b_i \leq i$ for
all $i$\footnote{The terminology arises from the following situation.
  Consider a linear parking lot with $n$ parking spaces and $n$ cars
  that want to park in the lot.  For $1 \leq i \leq n$, car $i$ wants
  to park in the space $a_i$.  At stage $i$ of the parking process,
  car $i$ parks in the first available spot $\geq a_i$, if any such
  spots are available.  If no such spots are available, car $i$ leaves
  the lot.  The driver preference sequence $(a_1, \dots, a_n)$ is a
  parking function if and only if all cars are able to park in the
  lot.}.  Parking functions were introduced by Konheim and Weiss
\cite{KonheimWeiss} in the context of computer science, but have seen
much application in algebraic combinatorics with connections to
Catalan combinatorics, Shi hyperplane arrangements, diagonal
coinvariant rings, and rational Cherednik algebras.  The set of
parking functions of size $n$ is famously counted by $(n+1)^{n-1}$.
The $\CC$-vector space $\Park_n$ spanned by the set of parking
functions of size $n$ carries a natural permutation action of the
symmetric group $S_n$ on $n$ letters:
\begin{equation}
\label{eq:parkingaction}
w.(a_1, \dots, a_n) = (a_{w(1)}, \dots, a_{w(n)})
\end{equation}
for $w \in S_n$ and $(a_1, \dots, a_n) \in \Park_n$.
\footnote{Here we adopt the symmetric group multiplication convention that
says, for example, $(1,2)(2,3) = (1,2,3)$ so that this is a {\it left} action.}

A {\it partition} $\lambda$ of a positive integer $n$ is a weakly decreasing sequence 
$\lambda = (\lambda_1 \geq \dots \geq \lambda_k)$
of nonnegative integers which sum to $n$.  We write $\lambda \vdash n$ to 
mean that $\lambda$ is a partition of $n$ 
and define $|\lambda| := n$.  We call $k$ the {\it length} of the partition $\lambda$.
Observe that we allow zeros as parts of our partitions and that these zeros are included 
in the length.
The {\it Ferrers diagram} of $\lambda$ consists of $\lambda_i$ left justified boxes in the 
$i^{th}$ row from the top (`English notation').
If $\lambda$ is a partition, we define a new partition $\mult(\lambda)$ whose parts are obtained
by listing the (positive) part multiplicities in $\lambda$ in weakly decreasing order.
For example, we have that $\mult(4,4,3,3,3,1,0,0) = (3,2,2,1)$.

We will make use of two orders on partitions in this paper, one partial and one total.  The first partial order is
{\it Young's lattice} with relations given by $\lambda \subseteq \mu$ if $\lambda_i \leq \mu_i$ for all
$i \geq 1$ 
(where we append an infinite string of zeros to the ends of $\lambda$ and $\mu$ so that these
inequalities make sense). 
Equivalently, we have that $\lambda \subseteq \mu$ if and only if 
the Ferrers diagram of $\lambda$ fits inside the Ferrers diagram of $\mu$.  {\it Graded reverse
lexicographical (grevlex) order} 
is the total order on partitions of fixed length $n$ defined by 
$\lambda \prec \mu$ if either $|\lambda| < |\mu|$ or the 
final nonzero entry in the vector difference $\lambda - \mu$ is positive.
For example, if $n = 6$ we have $(4,2,2,2,1,0) \prec (3,3,3,1,1,0)$.
In particular, either 
of the relations $\lambda \subseteq \mu$ or $\lambda \preceq \mu$ imply that 
$|\lambda| \leq |\mu|$.

For a partition $\lambda = (\lambda_1, \dots, \lambda_k) \vdash n$, we
let $S_{\lambda}$ denote the {\it Young subgroup} $S_{\lambda_1}
\times \cdots \times S_{\lambda_k}$ of $S_n$.  We denote by
$M^{\lambda}$ the coset representation of $S_n$ given by $M^{\lambda}
:= \Ind_{S_{\lambda}}^{S_n} ({\bf 1}_{S_{\lambda}}) \cong_{S_n} \CC
S_n / S_{\lambda}$ and we denote by $S^{\lambda}$ the irreducible
representation of $S_n$ labeled by the partition $\lambda$.

Let $R_n$ denote the $\CC$-vector space of class functions $S_n
\rightarrow \CC$.  Identifying modules with their characters, the set
$\{ S^{\lambda} \,:\, \lambda \vdash n \}$ forms a basis of $R_n$.
The graded vector space $R := \bigoplus_{n \geq 0} R_n$ attains the
structure of a $\CC$-algebra via the induction product $S^{\lambda}
\circ S^{\mu} := \Ind_{S_n \times S_m}^{S_{n+m}}(S^{\lambda}
\otimes_{\CC} S^{\mu})$, where $\lambda \vdash n$ and $\mu \vdash m$.

We denote by $\Lambda$ the ring of symmetric functions (in an infinite
set of variables $X_1, X_2, \dots$, with coefficients in $\CC$).  The
$\CC$-algebra $\Lambda$ is graded and we denote by $\Lambda_n$ the
homogeneous piece of degree $n$.  Given a partition $\lambda$, we
denote the corresponding Schur function by $s_{\lambda}$ and the
corresponding complete homogeneous symmetric function by
$h_{\lambda}$.

The {\it Frobenius character} is the graded 
$\CC$-algebra isomorphism $\Frob: R \rightarrow \Lambda$
induced by setting $\Frob(S^{\lambda}) = s_{\lambda}$.  It is well known that
we have $\Frob(M^{\lambda}) = h_{\lambda}$.  Generalizing slightly, if $V = \bigoplus_{k \geq 0} V(k)$
is a graded $S_n$-module, define 
the {\em graded Frobenius character}
$\grFrob(V; q) \in \Lambda \otimes_{\CC} \CC[[q]]$
to be the formal power series in $q$ with coefficients in $\Lambda$ given by
$\grFrob(V; q) := \sum_{k \geq 0} \Frob(V(k)) q^k$.

A {\it Dyck path of size $n$} is a lattice path $D$ in $\ZZ^2$
consisting of vertical steps $(0, 1)$ and horizontal steps $(1, 0)$ which
starts at $(0, 0)$, ends at $(n, n)$, and stays weakly above the line $y = x$.  A maximal continguous
sequence of vertical steps in $D$ is called a {\it vertical run} of $D$. 

We will associate two partitions to a Dyck path $D$ of size $n$.  The
{\it vertical run partition} $\lambda(D) \vdash n$ is obtained by
listing the (positive) lengths of the vertical runs of $D$ in weakly
decreasing order.  For example, if $D$ is the Dyck path in
Figure~\ref{fig:dycksix}, then $\lambda(D) = (3,2,1)$.  The {\it area
  partition} $\mu(D)$ is the partition of length $n$ whose Ferrers
diagram is the set of boxes to the upper left of $D$ in the $n \times
n$ square with lower left coordinate at the origin.  For example, if
$D$ is the Dyck path of size $6$ in Figure~\ref{fig:dycksix}, then
$\mu(D) = (5,1,1,1,0,0)$.  The boxes in the Ferrers diagram of
$\mu(D)$ are shaded.  We define the {\it area} statistic\footnote{Many
  authors instead define the area of a Dyck path $D$ to be the number
  of complete lattice squares between $D$ and the line $y = x$, so
  that our statistic would be the `coarea'.} on Dyck paths by
$\area(D) = |\mu(D)|$.  For the Dyck path in our running exampe,
$\area(D) = 8$.  By construction, we have that $\mult(\mu(D)) =
\lambda(D)$ for any Dyck path $D$ of size $n$.

Dyck paths of size $n$ can be used to obtain a decomposition of
$\Park_n$ as a direct sum of coset modules $M^{\lambda}$.  In
particular, let $D$ be a Dyck path of size $n$.  A {\it labeling} of
$D$ assigns each vertical run of $D$ to a subset of $[n] := \{1, 2,
\dots, n \}$ of size equal to the length of that vertical run such
that every letter in $[n]$ appears exactly once as a label of a
vertical run.  Figure~\ref{fig:dycksix} shows an example of a labeled
Dyck path of size $6$, where the subsets labeling the vertical runs
are placed just to the right of the runs.

\begin{figure}
\centering
\includegraphics[scale=0.4]{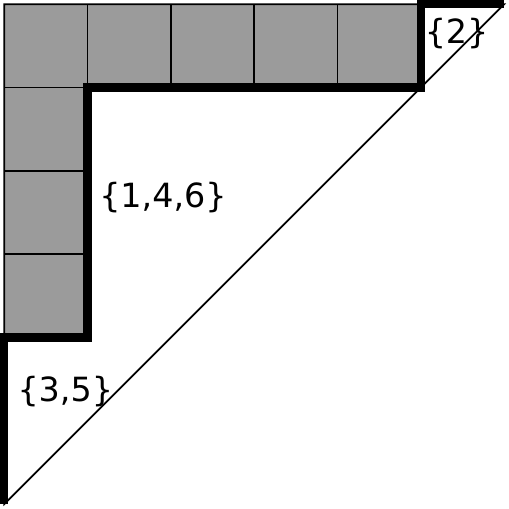}
\caption{A Dyck path of size $6$.}
\label{fig:dycksix}
\end{figure}

The set of labeled Dyck paths of size $n$ carries an action of $S_n$
given by label permutation.  There is an $S_n$-equivariant bijection
from the set of labeled Dyck paths $D$ of size $n$ to parking
functions $(a_1, \dots, a_n)$ of size $n$ given by letting $a_i$ be
one greater than the $x$-coordinate of the vertical run of $D$ labeled
by $i$.  For example, the labeled Dyck path in
Figure~\ref{fig:dycksix} corresponds to the parking function $(2, 6,
1, 2, 1, 2) \in \Park_6$.  Since any fixed labeled Dyck path of size
$D$ generates a cyclic $S_n$-module isomorphic to $M^{\lambda(D)}$, it
is immediate that the parking space $\Park_n$ decomposes into coset
representations as
\begin{equation}
\label{eq:cosetdecomposition}
\Park_n \cong_{S_n} \bigoplus_D M^{\lambda(D)},
\end{equation}
where the direct sum is over all Dyck paths $D$ of size $n$.
Equivalently, we have that the Frobenius character of $\Park_n$ is
given by $\Frob(\Park_n) = \sum_D h_{\lambda(D)}$.  For example, the 5
Dyck paths of size 3 shown in Figure~\ref{fig:dyckthree} lead to the
Frobenius character
\begin{equation}
\Frob(\Park_3) = h_{(3)} + 3 h_{(2,1)} + h_{(1,1,1)}.
\end{equation}

The vector space underlying the $S_{n+1}$-module which will extend
$\Park_n$ is a subspace of the polynomial ring $\CC[x_1, \dots,
x_{n+1}]$ in $n+1$ variables and first studied in the work of
Postnikov and Shapiro \cite{PostnikovShapiro}. Let $K_{n+1}$ denote
the complete graph on the vertex set $[n+1]$.  Given an edge $e = (i <
j)$ in $K_{n+1}$, we associate the polynomial weight $p(e) := x_i -
x_j \in \CC[x_1, \dots, x_{n+1}]$.  A subgraph $G \subseteq K_{n+1}$
(identified with its edge set) gives rise to the polynomial weight
$p(G) := \prod_{e \in G} p(e)$.  Following Postnikov and Shapiro, we
call a subgraph $G \subseteq K_{n+1}$ {\it slim} if the complement
edge set $K_{n+1} - G$ is a connected graph on the vertex set $[n+1]$.

\begin{defn}
Denote by $V_n$ the $\CC$-linear subspace of $\CC[x_1, \dots,
x_{n+1}]$ given by
\begin{equation}
V_n := \spn \{ p(G) \,:\, \text{$G$ is a slim subgraph of $K_{n+1}$} \}.
\end{equation}
Let $V_n(k)$ denote the homogeneous piece of $V_n$ of polynomial
degree $k$; the space $V_n(k)$ is spanned by those polynomials $p(G)$
corresponding to slim subgraphs $G$ of $K_{n+1}$ with $k$ edges.
\end{defn}

While the set of polynomials $\{ p(G) \,:\, G$ is a slim subgraph of
$K_{n+1} \}$ is linearly dependent in general, a basis for $V_n$ can
be constructed using standard matroid theoretic results
\cite[Proposition~9.4]{PostnikovShapiro}.  Fix a total order on the
edge set of $K_{n+1}$.  Given a spanning tree $T$ of $K_{n+1}$, the
{\it external activity} $\ex(T)$ of $T$ is the set of edges $e \in
K_{n+1}$ such that $e$ is the minimal edge of the unique cycle in $T
\cup \{ e \}$.  A basis of $V_n$ is given by 
\begin{equation*}
\{ p(K_{n+1}-(\ex(T) \cup T)) \,:\, \text{$T$ is a spanning tree of $K_{n+1}$} \}. 
\end{equation*}
It follows
immediately from Cayley's theorem that $\dim V_n = (n+1)^{n-1}$.
Aside from this dimension formula, we will make no further use
of this basis (or, indeed, any explicit basis) of $V_n$ for the rest of the paper.

Since the slimness of a subgraph is preserved under the action of
$S_{n+1}$ on the vertex set $[n+1]$ and $p(G)$ is homogeneous of
degree equal to the number of edges in $G$, it follows that $V_n =
\bigoplus_{k \geq 0} V_n(k)$ is a graded $S_{n+1}$-submodule of the
polynomial ring $\CC[x_1, \dots, x_{n+1}]$.  In fact, the space $V_n$
sits inside the copy of the coordinate ring of the reflection
representation of type A$_n$ sitting inside $\CC[x_1, \dots, x_{n+1}]$
generated by $x_i - x_{i+1}$ for $1 \leq i \leq n$.

The following result was conjectured by the first author.  We postpone
its proof, along with the proofs of the other results in this section,
to Section~\ref{proofs}.

\begin{theorem}
\label{maintheorem}
Embed $S_n$ into $S_{n+1}$ by letting $S_n$ act on the first $n$
letters. We have that
\begin{equation}
\label{main-iso}
\Res^{S_{n+1}}_{S_n} (V_n(k)) \cong_{S_n} \bigoplus_D M^{\lambda(D)},
\end{equation}
where the direct sum is over all Dyck paths of size $n$ and area $k$.
In particular, by Equation~\ref{eq:cosetdecomposition} we have that
\begin{equation}
\Res^{S_{n+1}}_{S_n}(V_n) \cong_{S_n} \Park_n.
\end{equation}
\end{theorem}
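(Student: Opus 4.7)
My plan is to exhibit an explicit $S_n$-equivariant direct sum decomposition
\[ V_n(k) = \bigoplus_{D} W_D, \]
indexed by Dyck paths $D$ of size $n$ with $\area(D) = k$, in which each summand $W_D$ is a cyclic $S_n$-submodule isomorphic to $M^{\lambda(D)}$.  For every Dyck path $D$ I will single out a distinguished slim subgraph $G_D$ of $K_{n+1}$ with exactly $k$ edges such that the $S_n$-stabilizer of $p(G_D) \in V_n(k)$ is the Young subgroup $S_{\lambda(D)}$; then setting $W_D := \spn_{S_n}\{p(G_D)\}$ produces a coset module of the desired type.

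The natural construction of $G_D$ reads its edges directly off the picture of $D$: to each of the $k = \area(D)$ cells lying above the path one associates an edge of $K_{n+1}$, in such a way that cells within a common vertical run correspond to edges incident to a common label set in $[n]$, with vertex $n+1$ playing the role of the ``last column''.  Slimness of $G_D$, i.e.\ connectivity of its complement in $K_{n+1}$, is then an essentially graph-theoretic consequence of the Dyck condition on $D$.  The stabilizer calculation itself uses the algebraic independence of the linear forms $x_i - x_j$ in $\CC[x_1, \dots, x_{n+1}]$: the identity $w.p(G_D) = p(G_D)$ in the polynomial ring forces $w$ to preserve the multiset of edges of $G_D$, and the combinatorial structure of $G_D$ makes this equivalent to $w$ permuting only within a common vertical-run label set.

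The heart of the argument, and the main obstacle, is showing that the subspaces $W_D$ actually span $V_n(k)$ and are in direct sum.  My strategy is to invoke the spanning-tree basis of $V_n$ coming from external activity and, using a carefully chosen total order on slim subgraphs of $K_{n+1}$, exhibit a unitriangular change of basis from that basis to the orbit-union of the vectors $\{\, w.p(G_D) : w \in S_n/S_{\lambda(D)},\, D \,\}$.  Once spanning is secure, linear independence reduces to a dimension count: the sum $\sum_D n!/|S_{\lambda(D)}|$ of $S_n$-orbit sizes over Dyck paths of size $n$ equals the number of labeled Dyck paths, which in turn equals the number of parking functions $(n+1)^{n-1} = \dim V_n$, and the graded count over Dyck paths of area $k$ must therefore saturate $\dim V_n(k)$.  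The final assertion $\Res^{S_{n+1}}_{S_n}(V_n) \cong \Park_n$ follows immediately by summing over $k$ and invoking the coset decomposition \eqref{eq:cosetdecomposition} of $\Park_n$.
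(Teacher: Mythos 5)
Your plan hinges on the claim that $W_D := \spn_{S_n}\{p(G_D)\}$ is a cyclic $S_n$-module isomorphic to $M^{\lambda(D)}$, and this is precisely what the paper warns \emph{fails}.  Immediately after the proof of Theorem~\ref{maintheorem} the authors remark: ``It may be tempting to guess that $p(D)$ generates a cyclic $S_n$-submodule of $V_n$ isomorphic to $M^{\lambda(D)}$, but this is false in general.''  The obstruction is exactly the one your outline glosses over.  Knowing that the stabilizer of $p(G_D)$ is (or contains) the Young subgroup $S_{\lambda(D)}$ only exhibits the cyclic module as a \emph{quotient} of $M^{\lambda(D)}$; it does not show the coset translates $w.p(G_D)$ are linearly independent, and in fact they are not in general.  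Consequently your proposed direct sum $\bigoplus_D W_D$ cannot be made to work inside $V_n$ itself, and the dimension count $\sum_D n!/|S_{\lambda(D)}| = (n+1)^{n-1}$ does not rescue the argument, since some $W_D$ will have strictly smaller dimension than $[S_n : S_{\lambda(D)}]$.

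The paper's route avoids this entirely.  Instead of decomposing $V_n$ internally, the authors build a separate combinatorial model $W_n \subseteq \CC[x_1,\dots,x_n]$ spanned by sub-staircase monomials, for which the decomposition $W_n \cong_{S_n} \bigoplus_D M^{\lambda(D)}$ is transparent (Lemma~\ref{substaircasemodule}).  They then define a graded $S_n$-map $\phi: V_n \to W_n$ by setting $x_{n+1}=0$ and truncating to sub-staircase monomials, and prove (Lemma~\ref{triangularity}) that the image vectors $\phi(p(D))$ expand unitriangularly in the monomial basis $\{x^\lambda\}$ with respect to grevlex order, with leading term $x^{\mu(D)}$.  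Surjectivity of $\phi$ then follows, and equality of dimensions ($\dim V_n = \dim W_n = (n+1)^{n-1}$, from Postnikov--Shapiro) forces $\phi$ to be an isomorphism.  Your unitriangularity idea is aimed at the spanning-tree basis of $V_n$ and a total order on slim subgraphs; the paper's triangularity is instead against a \emph{monomial} basis of the auxiliary space $W_n$, and the order used is grevlex on exponent partitions, which the authors emphasize is essential (the analogous statement for dominance order is false).  Without the intermediate model $W_n$, your triangularity step has no clean target to land in, and the cyclic-module claim it would need to support is simply untrue.
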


\begin{example}
In the case $n = 2$, 
Figure~\ref{fig:slim} shows that four slim
subgraphs of the complete graph $K_3$.  From left to right, the
corresponding polynomials are $1, x_1 - x_2, x_1 - x_3$, and $x_2 -
x_3$.  It follows that $V_2(0) = \spn \{ 1 \}$ and $V_2(1) = \spn \{
x_1 - x_2, x_1 - x_3, x_2 - x_3 \}$.  Observe that the graded
Frobenius character of $V_2$ is $\grFrob(V_2; q) = s_{(3)} q^0 +
s_{(2,1)} q^1$.  By the branching rule for symmetric groups (see
\cite{Sagan}), we have that $\grFrob(\Res^{S_3}_{S_2}(V_2); q) =
s_{(2)} q^0 + (s_{(2)} + s_{(1,1)}) q^1$.  Setting $q = 1$ yields
$\Frob(\Res^{S_3}_{S_2}(V_2)) = 2 s_{(2)} + s_{(1,1)}$, which agrees
with the Frobenius character of $\Park_2$.

\begin{figure}
\centering
\includegraphics[scale = 0.5]{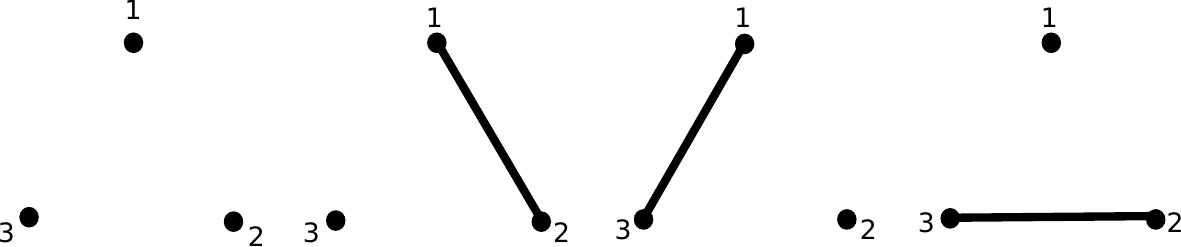}
\caption{The four slim subgraphs of $K_3$.}
\label{fig:slim}
\end{figure}
\end{example}

\begin{example}
Below we have the graded Frobenius character for $V_3$ and $V_4$.
\[
  \grFrob(V_3) = s_{(4)} + s_{(3,1)} q + (s_{(4)} + s_{(3,1)} +
  s_{(2,2)})q^2 + (s_{(3,1)} + s_{(2,1,1)} )q^3
\]
\begin{align*}
  \grFrob(V_4) &= s_{(5)} + s_{(4,1)}q + (s_{(5)} + s_{(4,1)} + s_{(3,2)})q^2\\ &\quad +
  (s_{(5)} + 2s_{(4,1)} + s_{(3,2)} + s_{(3,1,1)})q^3\\ &\ \quad+
  (s_{(5)} + 2s_{(4,1)} + 2s_{(3,2)} + s_{(3,1,1)} + s_{(2,2,1)})q^4\\ &\ \ \quad+
  (s_{(5)} + 2s_{(4,1)} + 2s_{(3,2)} + 2s_{(3,1,1)} + s_{(2,2,1)})q^5\\ &\ \ \ \quad+
  (s_{(4,1)} + s_{(3,2)} + s_{(3,1,1)} + s_{(2,2,1)} +s_{(2,1,1,1)})q^6.
\end{align*}
We leave it to the reader to check that the restrictions of these graded Frobenius 
characters yield the representations $\Park_3$ and $\Park_4$ of $S_3$ and $S_4$,
respectively.
\end{example}

Equivalently, we have that $\grFrob(\Res^{S_{n+1}}_{S_n}(V_n); q) =
\sum_D q^{\area(D)} h_{\lambda(D)}$, where the sum is over all Dyck
paths $D$ of size $n$.  For example, computing the area and run
partitions of the 5 Dyck paths of size 3 shown in
Figure~\ref{fig:dyckthree} shows that
\begin{equation}
\grFrob(\Res^{S_4}_{S_3}(V_3); q) = 
h_{(3)} q^0 + h_{(2,1)} q^1 + 2 h_{(2,1)} q^2 + h_{(1,1,1)} q^3.
\end{equation}

\begin{figure}
\centering
\includegraphics[scale=0.4]{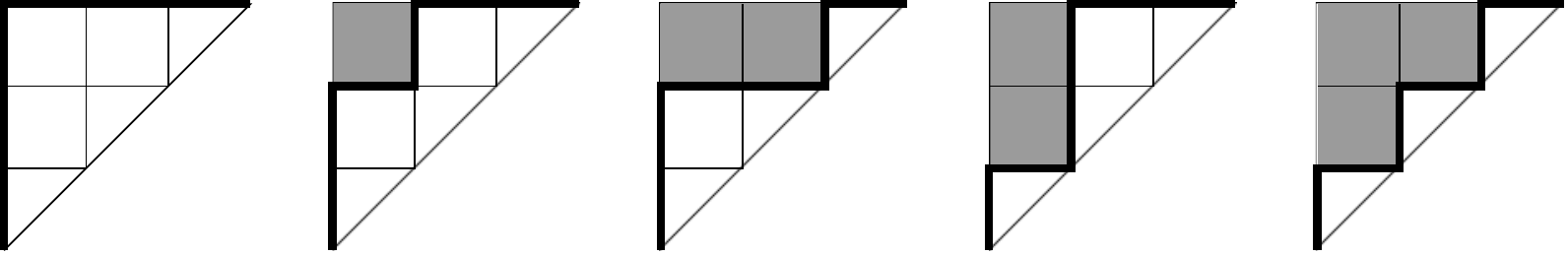}
\caption{The $5$ Dyck paths of size $3$.   From left to right, their contributions to the graded
Frobenius character
$\grFrob(\Res^{S_4}_{S_3}(V_3); q)$ are
$h_{(3)}q^0, h_{(2,1)} q^1, h_{(2,1)} q^2, h_{(2,1)}q^2,$ and $h_{(1,1,1)} q^3$.}
\label{fig:dyckthree}
\end{figure}

Postnikov and Shapiro showed that the dimension of the vector space
$V_n$ is equal to $(n+1)^{n-1}$, however the $S_n$-module structure of
$V_n$ has remained unstudied. Indeed, Theorem~\ref{maintheorem} is the
first description of the $S_n$-module structure of $V_n$. 

It is natural to ask for an explicit description of the
$S_{n+1}$-structure of $V_n$ or of its graded pieces $V_n(k)$.  This
problem is open in general, but we can describe the extended structure
of $V_n(k)$ in the extreme degrees $k = 0, 1, \dots, n-1$ as well as $k=
{n \choose 2}$.  Let $C_{n+1}$ be the cyclic subgroup of $S_{n+1}$
generated by the long cycle $c := (1, 2, \dots, n+1)$ and let $\zeta$
be the linear representation of $C_{n+1}$ which sends $c$ to
$e^{\frac{2 \pi i}{n+1}}$.  Mackey's Theorem can be used to prove that
the {\it Lie representation} $\Lie_n :=
\Ind_{C_{n+1}}^{S_{n+1}}(\zeta)$ of $S_{n+1}$ satisfies
$\Res^{S_{n+1}}_{S_n}(\Lie_n) \cong_{S_n} \CC[S_n]$.  Stanley proved
that the Lie representation arises as the action of $S_{n+1}$ on the
top poset cohomology of the lattice of set partitions of $[n+1]$,
tensored with the sign representation \cite{Stanley}.

\begin{theorem}
\label{extremedegrees}
The module $V_n(0)$ carries the trivial representation of $S_{n+1}$,
the module $V_n(1)$ carries the reflection representation of
$S_{n+1}$, and in general $V_n(k) = \operatorname{Sym}^k(V_n(1))$ for
$k < n$. The module $V_n( {n \choose 2} ) = V_n(\mathrm{top})$
carries the Lie representation of $S_{n+1}$ tensor the sign
representation.
\end{theorem}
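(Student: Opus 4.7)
Parts (1) and (2) are immediate from the definition of slimness. The empty subgraph is slim (its complement $K_{n+1}$ is connected) with $p(\emptyset) = 1$, so $V_n(0) = \CC \cdot 1$ carries the trivial representation. For $n \geq 2$, removing any single edge leaves $K_{n+1}$ connected, so every one-edge subgraph is slim and $V_n(1) = \spn\{x_i - x_j : 1 \leq i < j \leq n+1\}$ is the $n$-dimensional reflection representation of $S_{n+1}$.

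For part (3), note that the forms $x_i - x_{n+1}$ ($1 \leq i \leq n$) are algebraically independent and span $V_n(1)$, so the subring $W_n \subseteq \CC[x_1, \ldots, x_{n+1}]$ they generate is graded-isomorphic to $\Sym(V_n(1))$ as an $S_{n+1}$-algebra. In particular $W_n(k) \cong \Sym^k V_n(1)$, and $V_n(k) \subseteq W_n(k)$ always. Because $K_{n+1}$ has edge-connectivity $n$, every simple subgraph with $k < n$ edges is automatically slim, so $p(G) \in V_n(k)$ for all such simple $G$. To handle products with repeated edges, apply the three-vertex identity
\[
(x_i - x_j)^2 = (x_i - x_j)(x_i - x_m) - (x_i - x_j)(x_j - x_m).
\]
Given a multi-edge monomial $p(H) = \prod_e p(e)^{\mu_e}$ of degree $k < n$ with $\mu_e \geq 2$ at $e = (i,j)$, the at most $k - 2 < n - 1$ non-$e$ edges of $H$ can block at most $k - 2$ of the $n - 1$ candidate vertices $m \in [n+1]\setminus\{i,j\}$, so one can choose $m$ with both $(i,m)$ and $(j,m)$ absent from the edge support of $H$. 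The identity then writes $p(H) = p(H_1) - p(H_2)$, and the complexity $\sum_e \mu_e^2$ strictly drops by $2\mu_e - 2 \geq 2$. Induct on this complexity to express any degree-$k$ monomial in $V_n(1)$ as a combination of simple-graph products, all of which lie in $V_n(k)$, giving $V_n(k) = W_n(k) = \Sym^k V_n(1)$.

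For part (4), observe that a slim $G \subseteq K_{n+1}$ with $\binom{n}{2}$ edges has complement a connected graph on $n+1$ vertices with $n$ edges, i.e.\ a spanning tree $T$. Thus
\[
V_n(\mathrm{top}) = \spn\{p(K_{n+1} - T) : T \text{ a spanning tree of } K_{n+1}\},
\]
of dimension $n!$ by Theorem~\ref{maintheorem}. Since $\dim(\Lie_n \otimes \sign) = n!$ and $\Lie_n \otimes \sign = \Ind_{C_{n+1}}^{S_{n+1}}(\zeta \otimes \sign|_{C_{n+1}})$, the plan is to produce an $S_{n+1}$-equivariant embedding of the induced module into $V_n(\mathrm{top})$, which is then forced to be an isomorphism by dimension. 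By Frobenius reciprocity, this reduces to exhibiting a nonzero vector $v \in V_n(\mathrm{top})$ on which the long cycle $c = (1, 2, \ldots, n+1)$ acts by the scalar $(-1)^n \omega$, where $\omega = e^{2 \pi i / (n+1)}$. A natural candidate is the symmetrization $v = \sum_{k=0}^{n} ((-1)^n \omega)^{-k}\, c^k \cdot p(K_{n+1} - T_\ast)$ for $T_\ast$ the star centered at $n+1$, whose $c$-orbit cycles through the $n+1$ distinct stars of $K_{n+1}$.

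The main obstacle is verifying that $v \neq 0$: the $p(K_{n+1} - T)$ span $V_n(\mathrm{top})$ but are far from linearly independent, so one must rule out a conspiracy of relations killing the symmetrization. I would attack this by analyzing the linear dependencies among the minor Vandermondes $\Delta_{[n+1]\setminus\{s\}}$ arising from the star orbit. A cleaner alternative, avoiding any eigenvector hunt, is a direct character computation using the Postnikov-Shapiro basis $\{p(K_{n+1} - T) : \ex(T) = \emptyset\}$: compute $\chi_{V_n(\mathrm{top})}(w)$ on each $S_{n+1}$-conjugacy class not meeting $S_n$ and match it against the values for $\Lie_n \otimes \sign$ (for instance, $\chi_{\Lie_n \otimes \sign}(c) = (-1)^n \mu(n+1)$). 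Either route must be delicate enough to distinguish the specific extension $\Lie_n \otimes \sign$ from the other extensions of $\CC S_n$ to $S_{n+1}$ parametrized by the remaining linear characters of $C_{n+1}$.
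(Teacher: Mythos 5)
Parts (1) and (2) are correct and essentially identical to the paper's argument. Part (3) is also correct, but you take a genuinely different route. The paper proves $V_n(k) = \operatorname{Sym}^k(V_n(1))$ for $k < n$ by a dimension count: the Hilbert series of $V_n$ is a Tutte polynomial specialization $q^{\binom{n+1}{2}-n}T_{K_{n+1}}(1,1/q)$, and the paper's Lemma~\ref{lem:tutteCoeff} (proved via the coboundary polynomial) shows its first $n-1$ coefficients are $\binom{n+k-1}{k}$, matching $\dim\operatorname{Sym}^k(V_n(1))$; combined with the obvious inclusion this forces equality. Your argument instead proves surjectivity of the inclusion directly: any degree-$k$ monomial $p(H)$ corresponding to a multigraph is rewritten, via the three-vertex identity and a well-chosen terminating complexity $\sum_e \mu_e^2$, as a $\ZZ$-combination of $p(G)$ for simple $k$-edge subgraphs, which are all slim when $k<n$ because $K_{n+1}$ has edge-connectivity $n$. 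Your rewriting argument is more elementary and constructive; the paper's Tutte-polynomial argument is shorter once the Tutte machinery is invoked, and it transports immediately to the multigraph $K_{n+1}^{(\ell,m)}$ used in Theorem~\ref{extension}, which would require a little extra bookkeeping in your approach. Either is a valid proof of this part.

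Part (4) has a genuine gap. You sketch two strategies and carry out neither, and Plan~A has a conceptual problem even beyond the $v \neq 0$ difficulty you flag. Frobenius reciprocity produces from a nonzero $c$-eigenvector only a \emph{nonzero} $S_{n+1}$-map $\Ind_{C_{n+1}}^{S_{n+1}}\chi \to V_n(\mathrm{top})$; since the induced module is reducible (for $n \geq 3$ it has several irreducible constituents), a nonzero map between two $n!$-dimensional modules is not automatically injective, so ``forced to be an isomorphism by dimension'' does not follow. You would additionally need to show the eigenvector generates all of $V_n(\mathrm{top})$, which is not addressed. Plan~B (a character computation on fixed-point-free conjugacy classes) is plausible but is not carried out. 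The paper's argument avoids both issues: it builds an explicit $S_{n+1}$-equivariant linear isomorphism $p(G) \mapsto p(G)\cdot d(x_1-x_2)\wedge\cdots\wedge d(x_n-x_{n+1})/\prod_{i<j}(x_i-x_j)$ from $V_n(\mathrm{top})$ to the top de Rham cohomology of the braid arrangement complement, checks equivariance by observing that the $n$-form and the Vandermonde each carry the sign representation so the twists cancel, and then quotes the identification (via Whitney homology of the partition lattice and Stanley's theorem) of that top cohomology with $\Lie_n \otimes \sign$. You would do well to study that argument, since it both settles the identification and explains \emph{why} the sign twist appears.
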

The first part of this result is optimal in the sense that if $k \geq
n$ then $V_n(k)$ is a proper subspace of
$\operatorname{Sym}^k(V_n(1))$.

We will prove a bivariate generalization of Theorem~\ref{maintheorem}
which includes a `Fuss generalization' as a special case.  Given
$\ell, m, n > 0$, define a {\it $(\ell, m)$-Dyck path of size $n$} to
be a lattice path $D$ in $\ZZ^2$ consisting of vertical steps $(1,0)$
and horizontal steps $(0,1)$ which starts at $(-\ell + 1,0)$, ends at
$(mn, n)$, and stays weakly above the line $y = \frac{x}{m} $.  Taking
$\ell = m = 1$, we recover the classical notion of a Dyck path of size
$n$.  Taking $\ell = 1$ and $m$ general, the $(1, m)$-Dyck paths are
the natural Fuss extension of Dyck paths.  As before, we define the
{\it vertical run partition} $\lambda(D) \vdash n$ of an $(\ell,
m)$-Dyck path $D$ of size $n$ to be the partition obtained by listing
the lengths of the vertical runs of $D$ in weakly decreasing order.
We also define the {\it area partition} $\mu(D)$ to be the length $n$
partition whose Ferrers diagram fits between $D$ and a $(\ell - 1 +
mn) \times n)$ rectangle with lower left hand coordinate $(-\ell + 1,
0)$.  The {\it area} of $D$ is defined by $\area(D) := |\mu(D)|$.  We
have that $\mult(\mu(D)) = \lambda(D)$.

Figure~\ref{fig:extdyck} shows an example of a $(2,2)$-Dyck path of
size $3$.  The path $D$ starts at $(-1, 0)$, ends at $(6, 3)$, and
stays above the line $y = \frac{x}{2}$.  We have that $\lambda(D) =
(2,1) \vdash 3$, $\mu(D) = (5,1,1)$, and $\area(D) = 7$.

\begin{figure}
\centering
\includegraphics[scale=0.6]{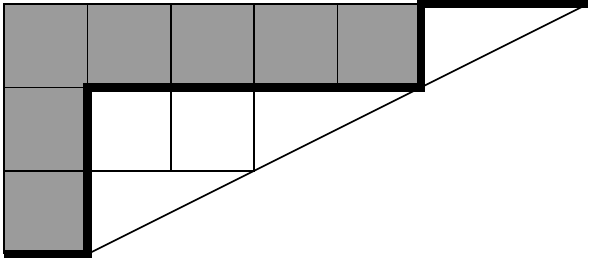}
\caption{A $(2,2)$-Dyck path of size $3$.}
\label{fig:extdyck}
\end{figure}

Let $K_{n+1}^{(\ell, m)}$ be the multigraph on the vertex set $[n+1]$
with $m$ edges between $i$ and $j$ for all $1 \leq i < j \leq n$ and
$\ell$ edges between $i$ and $n+1$ for all $1 \leq i \leq n$.  We call
a sub-multigraph $G$ of $K_{n+1}^{(\ell, m)}$ {\it slim} if the
multi-edge set difference $K_{n+1}^{(\ell, m)} - G$ is a connected
multigraph on $[n+1]$.  We extend the polynomial weight $p(G) \in
\CC[x_1, \dots, x_{n+1}]$ to multigraphs $G$ in the obvious way.

\begin{defn}
  Let $V_n^{(\ell, m)}$ be the $\CC$-linear subspace of $\CC[x_1,
  \dots, x_{n+1}]$ given by the span
\begin{equation}
V_n^{(\ell, m)} := \spn \{ p(G) \,:\, \text{$G$ is a slim sub-multigraph of $K^{(\ell, m)}_{n+1}$} \}.
\end{equation}
\end{defn}

As in the case $m = \ell = 1$, the space $V_n^{(\ell, m)}$ is stable
under the action of $S_n$, which respects the grading. When $\ell =
m$, $V_n^{(\ell,\ell)}$ also has an $S_{n+1}$-action, which also
respects the grading.  Postnikov and Shapiro showed that the dimension
of $V_n^{(\ell, m)}$ is $\ell (mn + \ell)^{n-1}$
\cite{PostnikovShapiro}.  Let $V_n^{(\ell, m)}(k)$ be the degree $k$
piece of $V_n^{(\ell, m)}$.

\begin{theorem}
\label{extension}
Under action of $S_n$ induced by permutation of vertex labels,
\begin{equation}
V_n^{(\ell, m)}(k) \cong_{S_n} \bigoplus_D
M^{\lambda(D)},
\end{equation}
where the direct sum is over all $(\ell, m)$-Dyck paths of size $n$
and area $k$. The containment $V_n^{(\ell, m)}(k) \subseteq
\Sym^k(V_n^{(\ell,m)}(1) )$ is an equality for $k < n$.

In the case $m = \ell$, the module $V_n^{(\ell,\ell)}(1)$ has
$S_{n+1}$-structure given by the reflection representation of
$S_{n+1}$, so that the equality $V_n^{(\ell, m)}(k) =
\Sym^k(V_n^{(\ell,m)}(1) )$, $k<n$, describes the $S_{n+1}$-module
structure completely.  The top degree space $V_n^{(\ell,\ell)}(\textup{top})$ is $\Lie_n \otimes (\sign)^{\otimes \ell}$.
\end{theorem}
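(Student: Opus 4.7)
My plan is to parallel the proof of Theorem~\ref{maintheorem} in the $(\ell, m)$-multigraph setting and then dispatch the two $S_{n+1}$-structural claims separately. The four assertions form a natural hierarchy: the $S_n$-decomposition does the heavy combinatorial lifting, the symmetric-power identity follows by an easy cut-counting argument, the degree-one identification is essentially by inspection, and the top-degree identification is the most delicate step.

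For the $S_n$-decomposition, I would rerun the matroid-theoretic argument used for Theorem~\ref{maintheorem}, now over the multigraph $K_{n+1}^{(\ell, m)}$. Fixing a total order on the multi-edges, the Postnikov--Shapiro recipe still yields a basis of the form $\{p(K_{n+1}^{(\ell, m)} - (\ex(T) \cup T)) : T \text{ a spanning tree of } K_{n+1}^{(\ell, m)}\}$, and the count $\ell(mn + \ell)^{n-1}$ matches the number of spanning trees of this multigraph. The main combinatorial input is a Fuss analogue of the classical bijection between externally-inactive spanning trees of $K_{n+1}$ and labeled size-$n$ Dyck paths; here this becomes a bijection between basis indices and labeled $(\ell, m)$-Dyck paths in which area matches polynomial degree and the vertical-run partition matches $S_n$-orbit type. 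Grouping basis elements by their underlying unlabeled path $D$ and arguing via a filtration (just as in the proof of Theorem~\ref{maintheorem}) produces the decomposition; the identity $\lambda(D) = \mult(\mu(D))$ guarantees that the block attached to $D$ is exactly $M^{\lambda(D)}$.

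The symmetric-power statement then follows quickly: disconnecting $K_{n+1}^{(\ell, m)}$ requires cutting at least $\min(\ell n,\, \ell + m(n - 1)) \geq n$ multi-edges, so every sub-multigraph with $k < n$ edges is automatically slim. Hence $V_n^{(\ell, m)}(k)$ is spanned by all products of $k$ degree-one polynomials from $V_n^{(\ell, m)}(1)$, which is exactly $\Sym^k(V_n^{(\ell, m)}(1))$. Parallel edges always produce the same polynomial, so $V_n^{(\ell, m)}(1) = \spn\{x_i - x_j : 1 \le i < j \le n + 1\}$ in every case; when $\ell = m$ this is $S_{n+1}$-stable and is the standard reflection representation.

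The main obstacle is the top-degree identification $V_n^{(\ell, \ell)}(\text{top}) \cong \Lie_n \otimes (\sign)^{\otimes \ell}$. The top piece is spanned by polynomials $p(G)$ whose complement in $K_{n+1}^{(\ell, \ell)}$ is a spanning tree of the multigraph. My plan is to bootstrap from Theorem~\ref{extremedegrees} (the $\ell = 1$ case, which already gives $\Lie_n \otimes \sign$) and track the extra factor introduced by parallel edges: if $e_1, \dots, e_\ell$ are the $\ell$ parallel copies of an edge $(i, j)$ in $K_{n+1}^{(\ell, \ell)}$, then $p(e_1) \cdots p(e_\ell) = (x_i - x_j)^\ell$ acquires the sign $(-1)^\ell$ under swapping $i$ and $j$, exactly matching the character of $\sign^{\otimes \ell}$ on a transposition. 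The hard part is promoting this character-level accounting to a genuine $S_{n+1}$-equivariant isomorphism. A natural route is to construct a surjection from $\Ind_{C_{n+1}}^{S_{n+1}}(\zeta \otimes \Res^{S_{n+1}}_{C_{n+1}}\sign^{\otimes \ell})$ onto $V_n^{(\ell, \ell)}(\text{top})$, then conclude via the $S_n$-restriction supplied by the first part of the theorem (which computes the dimension and reveals the regular representation of $S_n$ after restriction) combined with Mackey-style reasoning mirroring Stanley's identification in the classical case.
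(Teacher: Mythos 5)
Your proposal diverges from the paper's actual argument in ways that matter, and two of the steps have genuine gaps.

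\textbf{The $S_n$-decomposition.} You propose to rerun a matroid-theoretic argument using the basis $\{p(K_{n+1}^{(\ell,m)} - (\ex(T) \cup T))\}$ of externally-inactive spanning trees, group basis elements by unlabeled path, and filter. But this is not what the paper does for Theorem~\ref{maintheorem} (the paper explicitly states that beyond the dimension count it makes no use of this basis), and it is unclear that it can be made to work: the external activity basis depends on a fixed edge ordering, so it is not permuted by $S_n$ in any equivariant way, and hence ``grouping basis elements by unlabeled Dyck path'' does not give a filtration by $S_n$-submodules. The paper's route is to construct the projection $\phi^{(\ell,m)}: V_n^{(\ell,m)} \to W_n^{(\ell,m)}$ onto the span of sub-$(\ell,m)$-staircase monomials and prove surjectivity via a unitriangularity lemma in grevlex order (the analogue of Lemma~\ref{triangularity}), then identify the $S_n$-orbit structure of the monomial basis. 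You would need some analogue of that map-and-triangularity argument, not a filtration of a non-equivariant basis.

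\textbf{The $\Sym^k$ equality for $k < n$.} Your edge-cut count correctly shows that every sub-multigraph of $K_{n+1}^{(\ell,m)}$ with fewer than $n$ edges is slim, hence $V_n^{(\ell,m)}(k)$ is spanned by $\{p(G)\}$ over \emph{all} $k$-edge sub-multigraphs of $K_{n+1}^{(\ell,m)}$. But this set is \emph{not} ``all products of $k$ degree-one polynomials from $V_n^{(\ell,m)}(1)$'': a sub-multigraph $G$ has edge multiplicities bounded by those of $K_{n+1}^{(\ell,m)}$, so e.g.\ when $m = 1$ you never obtain $(x_i - x_j)^2$ directly as a $p(G)$ even though it lies in $\Sym^2$. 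Showing that these repeated products are nevertheless in the span requires an extra argument. The paper handles this by computing the Hilbert series of $V_n^{(\ell,m)}$ as a Tutte polynomial evaluation and invoking Lemma~\ref{lem:tutteCoeff}, which shows the first $n-1$ graded dimensions match $\binom{n+k-1}{k} = \dim\Sym^k(V_n^{(\ell,m)}(1))$, so the known inclusion must be an equality. Your cut-counting alone does not close this.

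\textbf{Degree one and top degree.} The identification of $V_n^{(\ell,m)}(1)$ with the span of $\{x_i - x_j\}$ and hence (when $\ell = m$) with the reflection representation is fine. For the top degree, your sign bookkeeping on parallel edges points in the right direction but does not amount to a proof. The paper's argument is cleaner and you should use it: in top degree the slim sub-multigraphs are exactly the complements of spanning trees of the multigraph, so each $p(G)$ is divisible by $\prod_{1\le i<j\le n+1}(x_i-x_j)^{\ell-1}$; dividing by this Vandermonde power is an explicit $S_{n+1}$-equivariant linear isomorphism $V_n^{(\ell,\ell)}(\mathrm{top}) \to V_n(\mathrm{top}) \otimes \sign^{\otimes(\ell-1)}$, and then Theorem~\ref{extremedegrees} finishes. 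The Mackey-style induced-representation construction you sketch would need to be built from scratch and is not necessary.
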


\begin{example}
Take $n=3$, $\ell = m = 2$ in Theorem~\ref{extension}, so that $V_3^{(2,2)}$ carries an $S_4$ action. 
We have that
\begin{align*}
\grFrob(\Res^{S_4}_{S_3}(V_3^{(2,2)}) &= 
h_{(3)} q^0 + h_{(2,1)} q^1 + 2 h_{(2,1)} q^2 + (h_{(3)} + h_{(2,1)} + h_{(1,1,1)})q^3 \\
 &\quad\ + (3h_{(2,1)} + h_{(1,1,1)})q^4 + (3 h_{(2,1)} + 2 h_{(1,1,1)}) q^5 
 \\&\quad\ \  +(2 h_{(2,1)} + 3 h_{(1,1,1)}) q^6 
 + (2 h_{(2,1)} + 3 h_{(1,1,1)}) q^7\\ &\quad \ \ \  + 3 h_{(1,1,1)} q^8 + h_{(1,1,1)} q^9.
\end{align*}
\end{example}

\section{Proofs}
\label{proofs}

While Theorem~\ref{extension} implies Theorem~\ref{maintheorem}, the proof of 
Theorem~\ref{extension} is a straightforward extension of the proof of Theorem~\ref{maintheorem}
and it will be instructive to prove Theorem~\ref{maintheorem} first.

The first step in the proof of Theorem~\ref{maintheorem} is to relate the modules on both sides
of the claimed isomorphism by associating a subgraph $G(D)$ of $K_{n+1}$ and a polynomial
$p(D) \in \CC[x_1, \dots, x_{n+1}]$ to any Dyck path $D$ of size $n$.  We start by labeling the
$1 \times 1$ box $b$ 
which is completely above the line $y = x$ with the edge $e(b) = (n-j ,n-i)$ in
$K_{n+1}$, where $(i, j)$ is the upper left coordinate of $b$.  See
Figure~\ref{fig:dyckgraph} for an example of this labeling in the case $n = 5$.
We let $G(D)$ be the subgraph of $K_{n+1}$ consisting of those edges $e(b)$ for
which the box $b$ is to the upper left of the path $D$.  In Figure~\ref{fig:dyckgraph}, the shaded
boxes above the path $D$ each contribute an edge to the subgraph $G(D)$ and we have that 
$G(D) = \{ 1-6, 1-5, 1-4, 1-3, 2-6, 2-5, 3-6 \}$.

\begin{figure}
\centering
\includegraphics[scale = 0.6]{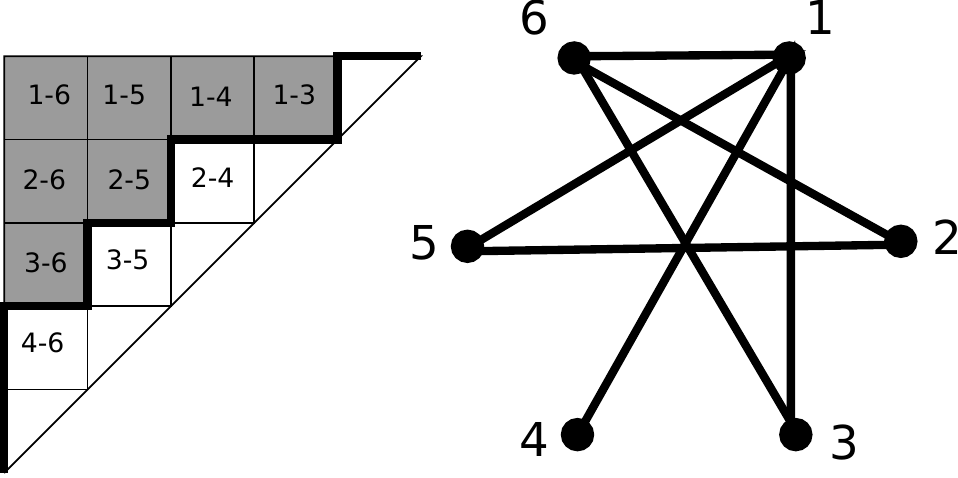}
\caption{A Dyck path $D$ of size $5$ and the associated subgraph $G(D)$ of $K_6$.}
\label{fig:dyckgraph}
\end{figure}

\begin{lemma}
\label{slim}
The subgraph $G(D)$ is slim for any Dyck path $D$.
\end{lemma}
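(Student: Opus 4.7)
The plan is to exhibit, independently of $D$, an explicit connected spanning subgraph of $K_{n+1} - G(D)$. Specifically, I would show that the Hamilton path on $[n+1]$ consisting of the $n$ edges $\{k, k+1\}$ for $1 \leq k \leq n$ always lies in the complement $K_{n+1} - G(D)$; since a Hamilton path is connected and spanning, this immediately yields that $K_{n+1} - G(D)$ is connected and hence that $G(D)$ is slim.

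To verify the claim I would inspect the labeling $b \mapsto e(b)$ closely. A $1 \times 1$ box $b$ with upper-left corner $(i, j)$ lies \emph{completely} above the line $y = x$ exactly when $j \geq i + 2$: if $j = i + 1$ then the lower-right corner $(i+1, j-1)$ of $b$ sits on the diagonal and $b$ is not completely above it. Reading off $e(b) = (n-j, n-i)$, the two vertex labels differ by $j - i$, so every edge in the image of the labeling joins two vertices whose labels differ by at least $2$. In particular, the ``consecutive'' edges $\{k, k+1\}$ do not arise as $e(b)$ for any box $b$ above the diagonal, so they can never belong to $G(D)$ regardless of which Dyck path $D$ one chooses. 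These $n$ edges therefore lie in $K_{n+1} - G(D)$ and form a spanning path of $[n+1]$, finishing the proof.

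The only point requiring care, and the main ``obstacle'' such as it is, is bookkeeping: one has to pin down the shift between box coordinates $(i,j)$ and vertex labels in $[n+1]$ so that the gap $j - i$ really corresponds to the difference of the two endpoints of $e(b)$. Once that convention is fixed, the lemma follows from the single observation that boxes strictly above the diagonal never label edges between consecutive vertices.
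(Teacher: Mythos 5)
Your argument is correct and is exactly the paper's proof: the paper simply observes that $G(D)$ contains none of the edges in the Hamilton path $1 - 2 - \cdots - (n+1)$, and you have supplied the bookkeeping (boxes strictly above the diagonal have $j - i \geq 2$, so $e(b)$ always joins vertices differing by at least $2$) that makes this observation transparent.
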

\begin{proof} The subgraph $G(D)$ contains none of the edges in the path
$1 - 2 - \cdots - (n+1)$.
\end{proof}

By Lemma~\ref{slim}, the polynomial $p(D) := p(G(D))$ is contained in $V_n$.  
For example, if $n = 5$ and $D$ is the Dyck path shown in Figure~\ref{fig:dyckgraph}, we have that 
\begin{equation}
\label{examplepolynomial}
p(D) = (x_1 - x_6) (x_1 - x_5) (x_1 - x_4) (x_1 - x_3) (x_2 - x_6) (x_2 - x_5) (x_3 - x_6) \in V_5.
\end{equation}
By construction, for any Dyck path $D$ the polynomial $p(D)$ is homogeneous with degree equal
to $\area(D)$.

In order to prove the direct sum decomposition in Theorem~\ref{maintheorem}, 
we will show that the polynomials $p(D)$ project nicely onto 
a certain subspace of $\CC[x_1, \dots, x_{n+1}]$.
Since Theorem~\ref{maintheorem} only concerns the restriction of $V_n$ to $S_n$, it is natural
to consider a subspace of $\CC[x_1, \dots, x_{n+1}]$ which is closed under the action of 
$S_n$ but not of $S_{n+1}$.  

Let $st_n := (n-1, n-2, \dots, 1, 0)$ be the {\it staircase partition} of length $n$.
We call a partition $\lambda = (\lambda_1, \dots, \lambda_n)$  {\em sub-staircase} if 
$\lambda \subseteq st_n$ (observe that this definition has tacit dependence on $n$).
For any Dyck path $D$ of size $n$,
the partition $\mu(D)$ is sub-staircase.

For a partition $\lambda = (\lambda_1, \dots, \lambda_n)$, we use the shorthand 
$x^{\lambda} := x_1^{\lambda_1} \cdots x_n^{\lambda_n} \in \CC[x_1, \dots, x_n]$.
We call a monomial $x_1^{d_1} \cdots x_{n+1}^{d_{n+1}}$ in
the variables $x_1, \dots, x_{n+1}$ {\it sub-staircase} if there exists a permutation
$w \in S_n$ and a sub-staircase
partition $\lambda = (\lambda_1 \geq \dots \geq \lambda_n)$ such that
\begin{equation}
\label{staircasedefinition}
x_1^{d_1} \cdots x_{n+1}^{d_{n+1}} = w.x^{\lambda}.
\end{equation}
In particular, the variable $x_{n+1}$ does not appear in any sub-staircase monomial.  
If the monomial $x_1^{d_1} \cdots x_{n+1}^{d_{n+1}}$ is sub-staircase, the partition
$\lambda$ is uniquely determined from the monomial;  
call this the 
{\it exponent partition} of the monomial.  More generally, if 
$x_1^{a_1} \cdots x_n^{a_n}$ is any monomial in $x_1, \dots, x_n$, we call the 
rearrangement $(\lambda_1 \geq \dots \geq \lambda_n)$ of $(a_1, \dots, a_n)$ the
{\it exponent partition} of this monomial.
Let $W_n \subset \CC[x_1, \dots, x_{n+1}]$ 
be the $\CC$-linear span of all sub-staircase monomials.  The subspace $W_n$ is closed under 
the action of $S_n$, but not under the action of $S_{n+1}$.

In the case $n = 3$, the $S_3$-orbits of the $16$ sub-staircase monomials in $\CC[x_1, \dots, x_4]$ are 
shown in the following table, where the left column shows a representative from each orbit.

\begin{center}
\begin{tabular}{| l || l |}
\hline
$1$ & \\
$x_1$ & $x_2, x_3$ \\
$x_1^2$ & $x_2^2, x_3^2$ \\
$x_1 x_2$ & $x_1 x_3, x_2 x_3$ \\
$x_1^2 x_2$ & $x_1^2 x_3, x_2^2 x_1, x_2^2 x_3, x_3^2 x_1, x_3^2 x_2$\\
\hline 
\end{tabular}
\end{center}

The $S_3$-orbits are parametrized by sub-staircase partitions 
$\lambda = (\lambda_1, \lambda_2, \lambda_3)$ and
each orbit contains a unique representative of the form
$x^{\lambda}$.  The staircase monomials
form a linear basis of $W_3$ and the cyclic $S_3$-submodule of $W_3$ generated 
by $x^{\lambda}$ is isomorphic to $M^{\mult(\lambda)}$.  
The natural bijection between exponent vectors and parking functions affords an isomorphism
$W_3 \cong_{S_3} \Park_3$.
These observations generalize in a 
straightforward way to the following lemma, whose proof is left to the reader.

\begin{lemma}
\label{substaircasemodule}
The set of sub-staircase monomials forms a linear basis for $W_n$ and is closed under the action
of $S_n$.  The $S_n$-orbits are parametrized by sub-staircase partitions $\lambda$, and
the orbit labeled by $\lambda$ has a unique monomial of the form $x^{\lambda}$.  The cyclic 
$S_n$-submodule of $W_n$ generated by $x^{\lambda}$ is isomorphic to 
$M^{\mult(\lambda)}$ and we have that 
$W_n \cong_{S_n} \Park_n$.
\end{lemma}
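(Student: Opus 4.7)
The plan is to unwind each assertion of the lemma into a direct observation about the $S_n$-orbit structure on monomials, and then to match these orbits with those of the parking-function action.

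First I would observe that distinct monomials in $\CC[x_1,\dots,x_{n+1}]$ are linearly independent, so the sub-staircase monomials automatically form a basis of $W_n$. The sub-staircase property depends only on the exponent partition of a monomial, which is unchanged by permutations in $S_n$; hence the set of sub-staircase monomials is $S_n$-stable. Two such monomials lie in the same $S_n$-orbit if and only if their exponent multisets agree, so the orbits are parametrized by sub-staircase partitions $\lambda$, and $x^\lambda$ is the unique representative in its orbit with weakly decreasing exponents on $x_1,\dots,x_n$.

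Next I would compute the $S_n$-stabilizer of $x^\lambda$. A permutation $w\in S_n$ fixes $x^\lambda$ precisely when it preserves the level sets of the map $i\mapsto \lambda_i$; grouping the indices $1,\dots,n$ by their $\lambda$-value (including the value $0$) partitions $[n]$ into blocks whose sizes are exactly the parts of $\mult(\lambda)$. Thus the stabilizer is a Young subgroup conjugate to $S_{\mult(\lambda)}$, and the cyclic submodule $\CC S_n\cdot x^\lambda$ is isomorphic to $\Ind_{S_{\mult(\lambda)}}^{S_n}(\mathbf{1})=M^{\mult(\lambda)}$. Summing over orbits yields
\[
W_n \;\cong_{S_n}\; \bigoplus_{\lambda\subseteq st_n} M^{\mult(\lambda)}.
\]

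Finally I would identify this decomposition with $\Park_n$ by an orbit-matching argument, which sidesteps any convention-dependent equivariant bijection. The $S_n$-orbit of a parking function $(a_1,\dots,a_n)$ depends only on its sorted rearrangement $(b_1\leq\dots\leq b_n)$, and the substitution $\lambda_j:=b_{n+1-j}-1$ gives a bijection between the sort types (sequences with $1\leq b_i\leq i$) and sub-staircase partitions $\lambda$ of length $n$. Because the multiset of entries is preserved under this bijection, the stabilizer of a parking function of sort type $\lambda$ is again a Young subgroup conjugate to $S_{\mult(\lambda)}$, so $\Park_n\cong\bigoplus_{\lambda\subseteq st_n} M^{\mult(\lambda)}$. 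Matching the two decompositions gives $W_n\cong_{S_n}\Park_n$. No step here is substantive; the only place that calls for care is the index-shift relating the parking inequality $b_i\leq i$ to the sub-staircase inequality $\lambda_i\leq n-i$, and the rest is elementary bookkeeping.
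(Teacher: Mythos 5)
Your proof is correct and follows essentially the same approach the paper sketches (the paper works the $n=3$ case explicitly and leaves the generalization to the reader). Your stabilizer computation, identification of the blocks with the parts of $\mult(\lambda)$, and the index shift $\lambda_j = b_{n+1-j}-1$ reconciling the sub-staircase condition $\lambda_j \le n-j$ with the parking condition $b_i \le i$ are exactly the bookkeeping the paper calls the ``natural bijection between exponent vectors and parking functions.''
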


With Lemma~\ref{substaircasemodule} in mind, we will construct a graded 
$S_n$-module isomorphism $V_n \xrightarrow{\sim} W_n$.
We define a graded
$S_n$-module homomorphism $\phi: V_n \rightarrow W_n$ by the following 
composition:
\begin{equation}
\label{maps}
\phi: V_n \hookrightarrow \CC[x_1, \dots, x_{n+1}] \twoheadrightarrow 
\CC[x_1, \dots, x_n] \twoheadrightarrow W_n,
\end{equation}
where the first map is inclusion, the second is the specialization $x_{n+1} = 0$, and the third linear 
map fixes the space $W_n$ pointwise and sends monomials which are not sub-staircase to zero.

We want to show that $\phi$ is an isomorphism.
Postnikov and Shapiro showed that $\dim(W_n) = \dim(V_n) = (n+1)^{n-1}$ 
\cite{PostnikovShapiro},
so it is enough 
to show that $\phi$ is surjective.  We will do this by  analyzing 
the polynomials $\phi(p(D))$, where $D$ is a Dyck path of size $n$.

The next lemma states that the transition matrix between the set
$\{ \phi(p(D)) \,:\, \text{$D$ a Dyck path of size $n$} \}$ expands in the monomial basis of $W_n$ given by
$\{ x^{\lambda} \,:\, \text{$\lambda$ sub-staircase} \}$ in a unitriangular way with respect
to  grevlex order (where we associate $\phi(p(D))$ with the  partition
$\mu(D)$).
The authors find it surprising that the corresponding unitriangularity statement is 
false when one considers linear extensions of the dominance order on partitions; 
grevlex order is primarily known for its utility in the efficient computation of Gr\"obner bases
and is far less ubiquitous in combinatorial representation theory than dominance order.

\begin{lemma}
\label{triangularity}
Let $D$ be a Dyck path of size $n$.  There exist integers $c_{\lambda, w} \in \ZZ$ such that
\begin{equation}
\phi(p(D)) = x^{\mu(D)} + \sum_{\substack{\lambda \prec \mu(D) \\ |\lambda| = |\mu(D)| \\
w \in S_n }}
c_{\lambda, w} w.x^{\lambda}.
\end{equation}
\end{lemma}
\begin{proof}
By definition, we have that
\begin{equation}
p(D) = \prod_{e = (i < j) \in G(D)} (x_i - x_j).
\end{equation}
Up to sign, a typical monomial in this expansion can be obtained by choosing an orientation
$\mathcal{O}$ of the graph $G(D)$, associating an oriented edge $k \rightarrow \ell$ to the variable
$x_{\ell}$, and multiplying the corresponding variables together.  The map $\phi$ kills any monomial
which contains the variable $x_{n+1}$, so up to sign a typical monomial in $\phi(p(D))$ is obtained
by choosing an orientation $\mathcal{O}$ of $G(D)$ such that every edge which contains the vertex
$n+1$ is oriented away from $n+1$.  If we denote by $\mathcal{O}_0$ the orientation 
of $G(D)$ which directs every edge towards its smaller endpoint, we see that the monomial
$x^{\mu(D)}$ arises in this way (with coefficient $1$).  We argue that there does not exist an orientation
$\mathcal{O}$ of $G(D)$ other than $\mathcal{O}_0$ such that the monomial $m$ corresponding to
$\mathcal{O}$ has exponent partition $\lambda$ satisfying $\lambda \succeq \mu(D)$.

Suppose for the sake of contradiction 
that there was an orientation $\mathcal{O}$ of $G(D)$  directing
incident edges away from the vertex $n+1$
other than the orientation $\mathcal{O}_0$ 
which gave rise to a monomial
$m$ in the variables $x_i$  with exponent partition $\lambda$ where $\mu(D) \preceq \lambda$.
If $\mu(D)$ were the empty partition, then $G(D)$ would be the empty graph and $\mathcal{O} = \mathcal{O}_0$,
so we conclude that $\mu(D)$ must have at least one nonempty column.
For $1 \leq i \leq n$, let $\mu(D)'_i$ denote the number of parts of the partition $\mu(D)$ which are $\geq i$.
Then the number of edges of $G(D)$ that contain $n+1$ equals $\mu(D)'_1$.
 By the way we labeled our boxes, this means that the product
$x_1 \cdots x_{\mu(D)'_1}$ divides $m$ and that the exponent partition $\lambda$ of $m$ is componentwise
 $\geq (1, 1, \dots, 1, 0, \dots, 0)$, where there are $\mu(D)'_1$ copies of  $1$ and $n - \mu(D)'_1$ 
copies of $0$.  
If $\mu(D)$ consists of a single nonempty column, we have $m = x_1 \dots x_{\mu(D)'_1} = x^{\mu(D)}$,
$\mathcal{O} = \mathcal{O}_0$,
and our contradiction.
Otherwise,
since $D$ is a Dyck path, we have that $n - \mu(D)'_1 > 0$.  In particular,
the condition $\mu(D) \preceq \lambda$ implies that none of the nonempty list of variables $x_{\mu(D)'_1 + 1},
x_{\mu(D)'+2}, \dots, x_n$ appear in $m$. In particular,
this means that the orientation $\mathcal{O}$ directs every edge away from the vertex $n$
(there are edges which are incident to $n$ because of the way we labeled our boxes and the fact that
$\mu(D)$ has at least two nonempty columns).

In general, suppose that the orientations $\mathcal{O}$ and $\mathcal{O}_0$ do not agree
and choose an edge $\{i, j\}$ with $i < j$ of $G(D)$ such that $\mathcal{O}$ and $\mathcal{O}_0$ disagree
on $\{i, j\}$ but agree on all edges $\{k, \ell\}$ with $k < \ell$ such that $\ell > j$ or $\ell = j$ but $k < i$.
By the reasoning of the last paragraph, we must have $j < n$.  Moreover, 
by considering the edges  of $G(D)$ with larger vertex $n+1, n, n-1, \dots, j+1$, we see that
$m$ is divisible by the monomial
\begin{equation*}
m' := (x_1 \cdots x_{\mu(D)'_1}) (x_1 \cdots x_{\mu(D)'_2}) \cdots (x_1 \cdots x_{\mu(D)'_{n-j+1}}).
\end{equation*}
This means that the exponent partition $\lambda$ is componentwise
$\geq$ the partition $(n-j+1, \dots, n-j+1, n-j, \cdots, n-j, \dots, 1, \dots, 1, 0, \dots, 0)$, where there
are $\mu(D)'_{n-j+1}$ copies of $n-j+1$, $\mu(D)'_{n-j} - \mu(D)'_{n-j+1}$ copies
of $n-j$, $\dots$, $\mu(D)_1' - \mu(D)_2'$ copies of $1$, and $n - \mu(D)'_1$ copies of $0$.  
In particular, the subsequence of this partition after the initial string of $(n-j+1)$'s agrees with 
the partition $\mu(D)$.
The 
fact that $D$ is a Dyck path implies that
the list of variables $x_{\mu(D)'_{n-j+1}+1}, x_{\mu(D)'_{n-j+1}+2}, \dots, x_j$ is nonempty and
the fact that $\lambda \preceq \mu(D)$ implies that none of these variables divides the 
quotient $\frac{m}{m'}$.  But the fact that $\mathcal{O}$ directs $i \rightarrow j$ implies that 
$x_j$ divides this quotient, which is a contradiction.
We conclude that $m = x^{\mu(D)}$ and
 $\mathcal{O} = \mathcal{O}_0$.
\end{proof}

As an example of Lemma~\ref{triangularity}, consider the case $n = 5$
and let the Dyck path $D$ be shown in Figure~\ref{fig:dyckgraph} with
$\mu(D) = (4,2,1,0,0)$.  To calculate $\phi(p(D))$, we set $x_6 = 0$ in
the product formula for $p(D)$ given in
Equation~\ref{examplepolynomial} and expand.  The resulting polynomial
is
\begin{align*}
\label{examplephi}
\phi(p(D)) &= x_1 (x_1 - x_5) (x_1 - x_4) (x_1 - x_3) x_2 (x_2 - x_5) x_3
= x_1^4 x_2^2 x_3 + \cdots .
\end{align*}
where the elipsis denotes terms involving sub-staircase monomials with
exponent partition $\prec (4,2,1,0,0)$.  We are ready to complete the
proof of Theorem~\ref{maintheorem}.

\begin{proof}[Proof of Theorem~\ref{maintheorem}]
By Lemma~\ref{substaircasemodule}, the set of sub-staircase
monomials forms a linear basis of $W_n$, so
Lemma~\ref{triangularity} implies that the $S_n$-module homomorphism
$\phi: V_n \rightarrow W_n$ is surjective.  Since $\dim(V_n) =
\dim(W_n)$, this implies that $\phi$ is also injective and gives an
isomorphism $\Res^{S_{n+1}}_{S_n}(V_n) \cong_{S_n} \Park_n$.  To
prove the graded isomorphism in Theorem~\ref{maintheorem}, it is
enough to observe that $\mult(\mu(D)) = \lambda(D)$ for any Dyck
path $D$ and apply Lemmas~\ref{substaircasemodule} and
~\ref{triangularity} together with the fact that $\phi$ is graded.
\end{proof}

It may be tempting to guess that $p(D)$ generates a cyclic
$S_n$-submodule of $V_n$ isomorphic to $M^{\lambda(D)}$, but this is
false in general.  The reason for this is that while the `leading
term' in the expansion of $\phi(p(D))$ in Lemma~\ref{triangularity}
generates the submodule $M^{\lambda(D)}$ under the action of $S_n$,
the other terms in this expansion can cause $\phi(p(D))$ to generate a
different cyclic submodule.

We are ready to prove the claimed $S_{n+1}$-structure of the extreme
degrees of the graded module $V_n(k)$.

\begin{proof}[Proof of Theorem~\ref{extremedegrees}]
  It is clear from the definitions that $V_n(0)$ carries the trivial
  representation of $S_{n+1}$.  The space $V_n(1)$ has basis given by
  the polynomials $x_1 - x_2, x_2 - x_3, \dots, x_n - x_{n+1}$ and
  hence carries the reflection representation of $S_{n+1}$ (i.e., the
  irreducible $S_{n+1}$-module corresponding to the partition
  $(n,1)$). Since $V_n \subseteq \Sym( V_n(1) )$ we are claiming that
  in degree $k < n$ this is an equality. The Hilbert series of
  $V_n$ is the Tutte polynomial evaluation $q^{\binom{n+1}{2} -
    n}T_{K_{n+1}}(1,1/q)$ and so we must prove that the first $n-1$
  terms of this sum are the binomial coefficents $\binom{n+k-1}{k}$.
  There is nothing special about $K_{n+1}$ in this claim and we will
  prove a more general statement in Lemma~\ref{lem:tutteCoeff}.

  To prove that $V_n(\mathrm{top})$ is isomorphic to $\Lie_{n+1}
  \otimes \sign$ we reason as follows. The space $V_n(\mathrm{top})$
  is spanned by those $p(G)$ where the complementary subgraph $K_{n+1}
  \setminus G$ is connected and has $n$ edges.

  Let $\mathcal{A}_n$ denote the braid arrangementin $\CC^{n+1}$,
  which is the union of those hyperplanes with at least two
  coordinates equal. Let $H^*( \CC^{n+1} \setminus \mathcal{A}_n;\CC)$
  denote the (complexified) de Rham cohomology ring of its
  complement. Consider, now, the linear map $c:V_n(\mathrm{top}) \to
  H^{n}( \CC^{n+1} \setminus \mathcal{A}_n )$ that sends
  \[
  p(G) \mapsto  p(G) \cdot{d(x_1-x_2) \wedge d(x_2-x_3) \wedge \dots \wedge d(x_n
    -x_{n+1})}/\prod_{1 \leq i < j \leq n} (x_i-x_j).
  \]
  This is an isomorphism of vector spaces, since it is division by the
  Vandermonde product, followed by multiplication by the $n$-form. To
  see that $c$ is equivariant notice that $\bigwedge^n V_n(1)$ carries
  the sign representation of $S_{n+1}$, because it is $1$ dimensional
  and non-trivial. Likewise does the one dimensional representation
  spanned by the Vadnermonde product. It follows that the signs
  introduced by multiplication by the $n$-form and division by the
  Vandermonde cancel, and $c$ is equivariant.

  Finally, the top degree cohomology of the complement $\CC^{n+1}
  \setminus \mathcal{A}_n$ is known to be isomorphic to the top
  degreee \textit{Whitney homology} of its lattice of flats
  \cite[Theorem~7.2.10]{bj}, and this correspondence is at once seen
  to be $S_{n+1}$-equivariant. The lattice of flats of $\mathcal{A}_n$
  is the partition lattice $\Pi_{n+1}$ and by a result of Stanley
  \cite{Stanley} (beautifuly recounted by Wachs in
  \cite[Section~4.4]{Wachs}), the top degree Whitney homology of the
  partition lattice $\Pi_{n+1}$ is $\Lie_{n+1} \otimes \sign$.
\end{proof}

\begin{lemma}\label{lem:tutteCoeff}
  Let $G$ be a connected graph on $v$ vertices with $e$ edges. Denote
  the Tutte polynomial of $G$ by $T_G(x,y)$. Then,
  the polynomial $q^{e-v+1}T_G(1,1/q)$ takes the form,
  \[
  1+  (v-1) q +
  \binom{v}{2}q^2 + \binom{v+1}{3}q^3 +\dots+
  \binom{(v-1) + (v-2) - 1}{v-2}q^{v-2} + O(q^{v-1}).
  \]
\end{lemma}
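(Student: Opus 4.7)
The plan is to start from the subgraph expansion of the Tutte polynomial,
\[
T_G(1,y) \;=\; \sum_{S} (y-1)^{|S|-v+1},
\]
where the sum runs over all connected spanning subgraphs $S \subseteq E(G)$. Substituting $y=1/q$ and multiplying through by $q^{e-v+1}$ produces the identity $q^{e-v+1} T_G(1,1/q) = \sum_{S} q^{e-|S|}(1-q)^{|S|-v+1}$. The task is to extract the coefficient of $q^k$ for $0 \le k \le v-2$ from this expression.

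To that end, let $c_\ell$ denote the number of connected spanning subgraphs of $G$ with exactly $\ell$ edges. Grouping terms by $|S|$ and expanding $(1-q)^{e-v+1-j}$ by the binomial theorem, one obtains
\[
[q^k]\bigl(q^{e-v+1} T_G(1,1/q)\bigr) \;=\; \sum_{j=0}^{k} (-1)^{k-j}\, c_{e-j}\, \binom{e-v+1-j}{\,k-j\,}.
\]
The key observation is that, in the intended setting, $G$ has edge connectivity at least $v-1$ (as is the case for $G=K_{n+1}$, where the edge connectivity equals $v-1=n$). Consequently, removing any at most $v-2$ edges leaves $G$ connected, and we have $c_{e-j} = \binom{e}{j}$ for every $0 \le j \le v-2$.

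Plugging this count into the sum and applying the upper-negation identity $\binom{e-v+1-j}{k-j} = (-1)^{k-j}\binom{v+k-e-2}{k-j}$ converts the alternating sum into a Chu--Vandermonde convolution
\[
\sum_{j=0}^{k} \binom{e}{j} \binom{v+k-e-2}{k-j} \;=\; \binom{v+k-2}{k},
\]
which is independent of $e$ and yields exactly the coefficient asserted by the lemma. The main subtlety is the edge-connectivity input that guarantees $c_{e-j} = \binom{e}{j}$ throughout the range $j \le v-2$; this is what makes the dependence on the particular graph drop out and leaves the universal formula $\binom{v+k-2}{k}$ at the end, matching the expansion of $1/(1-q)^{v-1}$ up to order $q^{v-2}$.
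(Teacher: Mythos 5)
Your proof is correct, takes a genuinely different route from the paper's, and --- more importantly --- identifies and corrects a real gap in the lemma as stated. The paper's argument passes through the coboundary polynomial $\overline{\chi}_G(\lambda,\nu)$ via the identity $q^{e-v+1}T_G(1,1/q) = \frac{q^e}{(1-q)^{v-1}}\overline{\chi}_G(0,1/q)$, reducing the claim to showing that no coloring of the vertices of $G$ produces a number of monochromatic edges strictly between $e-v+1$ and $e$. You instead expand $T_G(1,y)$ over connected spanning subgraphs, express the coefficient of $q^k$ as an alternating sum in the numbers $c_{e-j}$ of connected spanning subgraphs missing $j$ edges, and collapse it via Chu--Vandermonde to the universal answer $\binom{v+k-2}{k}$, the $k$-th coefficient of $(1-q)^{-(v-1)}$.

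The substantive point is your edge-connectivity hypothesis: it is not merely a convenience, and the lemma as printed is actually false for an arbitrary connected graph. For a tree on $v\geq 3$ vertices one has $e-v+1=0$ and $T_G(1,1/q)=1$, so the left-hand side is the constant $1$ while the lemma asserts a nonzero coefficient $v-1$ on $q$; for $G=C_4$ the left-hand side is $1+3q$, while the lemma predicts $1+3q+6q^2+O(q^3)$. Correspondingly, the step in the paper's proof claiming that a coloring with more than $e-v+1$ monochromatic edges forces those edges to form a connected subgraph fails: the two-block coloring of $C_4$ into $\{1,2\}$ and $\{3,4\}$ yields two disjoint monochromatic edges, already exceeding $e-v+1=1$. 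Your hypothesis, edge connectivity at least $v-1$ (equivalently $c_{e-j}=\binom{e}{j}$ for all $j\leq v-2$), is precisely equivalent to the vanishing of $c_i(G;\lambda)$ on the range $e-v+1<i<e$ that the paper's proof needs, and it does hold for $K_{n+1}$ (edge connectivity $n$) and for $K_{n+1}^{(\ell,m)}$. The lemma's applications in the paper are therefore unaffected, but the lemma should carry the extra hypothesis, and the remark that ``there is nothing special about $K_{n+1}$'' is an overstatement.
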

\begin{proof} We write $T_G(x,y)$ in terms of the two variable
  \textit{coboundary polynomial},
  $\overline{\chi}_G(\lambda,\nu)$. This is the sum
  \[
  \overline{\chi}_G(\lambda,\nu) = \frac{1}{\lambda}\sum_{i=0}^e
  c_i(G;\lambda) \nu^i
  \]
  where $c_i(G;\lambda)$ is the number of ways to color the vertices
  of $G$ with $\lambda$ colors and exactly $i$ monochromatic edges. It
  is a fact that this is a polynomial in $\lambda$ and $\nu$. Now by
  \cite[Proposition~6.3.26]{matroids},
  \[
  q^{e-v+1}T_G(1,1/q) = \frac{q^e}{(1-q)^{v-1}} \overline{\chi}_G(0,1/q).
  \]
  Thus, to prove the first part of the lemma we will show that
  $c_i(G;\lambda) = 0$ for $e-v+1 < i < e$, and that $c_e(G;\lambda) =
  \lambda$. Suppose that we have colored the vertices of $G$ and we
  have more than $e-v+1$ monochromatic edges. Then the collection of
  monochromatic edges forms a connected subgraph of $G$. It follows
  that all vertices of $G$ are colored the same and hence all edges of
  $G$ are monochromatic. This means that $c_i(G;\lambda) = 0$ unless
  $i = e$. That $c_e(G;\lambda) = \lambda$ is clear.
\end{proof}

The proof of Theorem~\ref{extension} is a straightforward extension
of the proof of Theorem~\ref{maintheorem}.  We will be somewhat brief.

\begin{proof}[Proof of Theorem~\ref{extension}]
Given any $(\ell, m)$-Dyck path $D$ of size $n$
we associate a sub-multigraph $G(D)$ of $K_{n+1}^{(\ell, m)}$ by letting every box which contributes
to $\area(D)$ correspond to a single edge in the multigraph $G(D)$;  the labeling which 
accomplishes this is shown in Figure~\ref{fig:extdyckgraph} in the case
$(\ell, m) = (3, 2)$ and $n = 4$.  For general $\ell, m,$ and $n$, we label the boxes in the 
$i^{th}$ row from the top from left to right with
$(\ell + m - 2)$ copies of the edge $i - (n+1)$, 
$m$ copies of the edge $i - n$, 
$m$ copies of the edge $i - (n-1), \dots,$
$m$ copies of the edge $i - (i+2)$, and
$(m-1)$ copies of the edge $i - (i+1)$.

For any $(\ell, m)$-Dyck path $D$ of size $n$, the multigraph complement of $G(D)$ within
$K^{(\ell, m)}_n$ contains each of the edges in the path $1 - 2 - \dots - n - (n+1)$ with 
multiplicity at least one.
Therefore, the sub-multigraph $G(D)$ is slim and the polynomial
$p(D) := p(G(D))$ is contained in $V^{(\ell, m)}_n$.

\begin{figure}
\centering
\includegraphics[scale = 0.6]{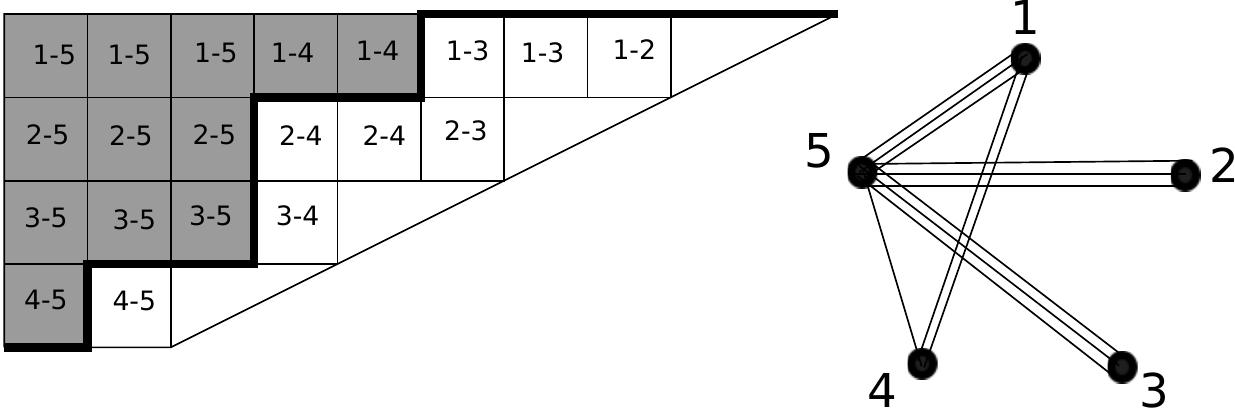}
\caption{A $(3,2)$-Dyck path $D$ of size $4$ and the associated sub-multigraph $G(D)$ of
$K^{(3,2)}_4$.}
\label{fig:extdyckgraph}
\end{figure}

We say that a partition $\lambda$ with $n$ parts is  
{\it sub-$(\ell, m)$-staircase} 
 if in Young's lattice we have the relation
 $\lambda \subseteq (\ell - 1 + m(n-1), \ell - 1 + m(n-2), \dots, \ell -1)$. 
A monomial $x_1^{d_1} \cdots x_{n+1}^{d_{n+1}}$ is 
{\it sub-$(\ell, m)$-staircase} if there exists $w \in S_n$ and a sub-$(\ell, m)$-staircase
partition $\lambda$ such that 
\begin{equation}
x_1^{d_1} \cdots x_{n+1}^{d_{n+1}} = x_{w(1)}^{\lambda_1} \cdots x_{w(n)}^{\lambda_n}.
\end{equation}
In particular, the variable $x_{n+1}$ does not appear in any sub-$(\ell, m)$-staircase
monomial.

Let $W^{(\ell, m)}_n$ be the subspace of $\CC[x_1, \dots, x_{n+1}]$ spanned by the set of all
sub-$(\ell, m)$-staircase monomials.   
The space $W^{(\ell, m)}_n$ carries a graded action of $S_n$.  The argument used to prove
Lemma~\ref{substaircasemodule} extends to show that 
the degree $k$ homogeneous piece of $W^{(\ell, m)}_n$ is isomorphic as an
$S_n$-module to the direct sum on the right hand side of the isomorphism asserted in
Theorem~\ref{extension}.

The isomorphism in Theorem~\ref{extension} 
is proven by showing that the graded $S_n$-module homomorphism
$\phi^{(\ell, m)}: V^{(\ell, m)}_n \rightarrow W^{(\ell, m)}_n$ given by the composite
\begin{equation}
\phi^{(\ell, m)}: V^{(\ell, m)}_n \hookrightarrow \CC[x_1, \dots, x_{n+1}] \twoheadrightarrow
\CC[x_1, \dots, x_n] \twoheadrightarrow W^{(\ell, m)}_n
\end{equation}
is an isomorphism, where the first map is inclusion, the second is the evaluation $x_{n+1} = 0$, and
the third fixes $W^{(\ell, m)}_n$ pointwise and sends every monomial which is not
sub-$(\ell, m)$-staircase to zero.  

Postnikov and Shapiro proved that the vector space $V^{(\ell, m)}_n$ has dimension 
$\ell (\ell + mn)^{n-1}$ \cite{PostnikovShapiro}.   
Pitman and Stanley \cite{PitmanStanley} and Yan \cite{Yan} showed that the number of
(exponent vector of) sub-$(\ell, m)$-staircase monomials equals 
$\ell (\ell + mn)^{n-1}$.  Since these monomials form a basis
for $W^{(\ell, m)}_n$, 
 in order
to prove that $\phi^{(\ell, m)}$ is an isomorphism of $S_n$-modules, it is enough to show that
$\phi^{(\ell, m)}$ is surjective.

The fact that $\phi^{(\ell, m)}$ is surjective follows from the following triangularity result which
generalizes Lemma~\ref{triangularity}.  Recall that $\mu(D)$ is the partition whose Ferrers
diagram lies to the northwest of an 
$(\ell, m)$-Dyck path $D$ (for example, if $D$ is the 
$(3, 2)$-Dyck path appearing on the left in Figure~\ref{fig:extdyckgraph}, then
$\mu(D) = (5,3,3,1)$.  The proof of Lemma~\ref{extendedtriangularity} is 
almost identical to the proof of Lemma~\ref{triangularity} and is left to the reader.

\begin{lemma}
\label{extendedtriangularity}
Let $D$ be an $(\ell, m)$-Dyck path of size $n$.
The monomial expansion of $\phi^{(\ell, m)}(p(D))$ has the form
\begin{equation}
\phi^{(\ell, m)}(p(D)) = 
x^{\mu(D)} + \cdots,
\end{equation}
where the elipsis denotes terms involving monomials whose exponent
partitions are $\prec \mu(D)$. 
\end{lemma}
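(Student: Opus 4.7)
The plan is to mirror the proof of Lemma~\ref{triangularity} verbatim, with the arithmetic adjusted for multi-edges and the generalized staircase bound. Write
\begin{equation*}
p(D) \;=\; \prod_{e\in G(D)} p(e),
\end{equation*}
where the product runs over the multi-edges of $G(D)$. Expanding each factor $x_i-x_j$ with $i<j$, a typical signed monomial in $p(D)$ is determined by choosing an \emph{orientation} $\mathcal{O}$ of the multigraph $G(D)$ (i.e.\ an orientation of each multi-edge separately); an edge oriented $k\to\ell$ contributes the variable $x_\ell$. The map $\phi^{(\ell,m)}$ kills every monomial containing $x_{n+1}$, so only orientations that direct every edge incident to $n+1$ away from $n+1$ survive.

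Let $\mathcal{O}_0$ be the orientation that directs each edge toward its smaller endpoint. Using the box-labeling scheme described just before the lemma, the box in row $i$ of $\mu(D)$ contributes the variable $x_i$ under $\mathcal{O}_0$, so that $\mathcal{O}_0$ produces the monomial $x^{\mu(D)}$ with coefficient $+1$. It remains to show that no other surviving orientation $\mathcal{O}\neq \mathcal{O}_0$ produces a monomial whose exponent partition $\lambda$ satisfies $\lambda\succeq\mu(D)$ in grevlex order.

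For this I would carry out the same downward induction on the larger endpoint of the disagreeing edge. Suppose $\mathcal{O}\ne\mathcal{O}_0$ and let $\{i,j\}$ with $i<j$ be an edge on which $\mathcal{O}$ disagrees with $\mathcal{O}_0$, chosen so that $\mathcal{O}$ agrees with $\mathcal{O}_0$ on every edge $\{k,\ell\}$ with $\ell>j$, and on every edge $\{k,j\}$ with $k<i$. Since $\mathcal{O}$ directs every edge incident to $n+1$ away from $n+1$, the labeling forces $j<n+1$, and more generally an explicit divisor
\begin{equation*}
m' \;=\; \prod_{t=1}^{n-j+1} \bigl(x_1 x_2 \cdots x_{\mu(D)'_t}\bigr)
\end{equation*}
of the monomial $m$ produced by $\mathcal{O}$ (where $\mu(D)'_t$ denotes the number of parts of $\mu(D)$ that are $\geq t$, computed inside the $(\ell,m)$-staircase). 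The hypothesis $\lambda\succeq\mu(D)$ then forbids any of the variables $x_{\mu(D)'_{n-j+1}+1},\dots,x_j$ from appearing in $m/m'$; the $(\ell,m)$-Dyck condition $y\geq x/m$ guarantees this list is nonempty exactly as the classical Dyck condition did before. But $\mathcal{O}$ directs $i\to j$, so $x_j$ must divide $m/m'$, a contradiction.

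The main obstacle is arithmetic bookkeeping: the box-labeling of the $(\ell,m)$-staircase assigns variable multiplicities $\ell+m-2$, $m,\dots,m$, $m-1$ across each row, so one must verify that the ``column counts'' $\mu(D)'_t$ used above genuinely record the number of edges of $G(D)$ with prescribed larger endpoint, and that the $(\ell,m)$-Dyck inequality, rather than the ordinary Dyck inequality, is exactly what forces the intermediate variables $x_{\mu(D)'_{n-j+1}+1},\dots,x_j$ to exist. Once these counts are checked, the structural argument of Lemma~\ref{triangularity} carries over without change, and unitriangularity with respect to grevlex follows.
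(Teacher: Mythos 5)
The paper's own ``proof'' of Lemma~\ref{extendedtriangularity} is literally the sentence ``The proof of Lemma~\ref{extendedtriangularity} is almost identical to the proof of Lemma~\ref{triangularity} and is left to the reader,'' so your strategy --- transcribing the classical orientation/grevlex argument with multi-edge bookkeeping --- is exactly what the authors have in mind. The skeleton you lay out (monomials $\leftrightarrow$ orientations, surviving monomials $\leftrightarrow$ orientations directing edges away from $n+1$, the canonical orientation $\mathcal{O}_0$ giving $x^{\mu(D)}$ with coefficient $+1$, and a ``last disagreement'' induction to rule out $\lambda\succeq\mu(D)$) is correct and is the right way to do this.

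However, the one formula you write down explicitly is carried over too literally. In the classical case the divisor
\[
m' \;=\; \prod_{t=1}^{n-j+1}\bigl(x_1\cdots x_{\mu(D)'_t}\bigr)
\]
works because in $\mu(D)$ column $t$ is labeled precisely by edges with larger endpoint $n+2-t$, so that columns $1,\dots,n-j+1$ account for all edges with larger endpoint in $\{j+1,\dots,n+1\}$ and $\mu(D)'_t$ literally counts them. In the $(\ell,m)$-staircase that one-to-one correspondence between columns and larger endpoints is gone: the first $\ell+m-2$ columns of a row all carry the edge $i-(n+1)$, the next $m$ columns carry $i-n$, and so on. Consequently $\mu(D)'_1$ is \emph{not} the number of edges of $G(D)$ incident to $n+1$ (e.g.\ for the $(3,2)$-path with $\mu(D)=(5,3,3,1)$ in the paper, $\mu(D)'_1=4$ but there are ten copies of edges incident to vertex $5$), and the upper limit of the product must be replaced by the total number of columns whose labels have larger endpoint $\geq j+1$, namely $(\ell+m-2)+m(n-j)$ rather than $n-j+1$. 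Once you use this adjusted column count, the same comparison against the sub-$(\ell,m)$-staircase bound $\mu(D)_j \leq \ell-1+m(n-j)$ makes the variables $x_{\mu(D)'_C+1},\dots,x_j$ nonempty and forbidden, and the contradiction $x_j \mid m/m'$ goes through as before. You do flag this as ``arithmetic bookkeeping ... one must verify,'' so you are aware a check is needed; just be warned that the formula as written, rather than merely unverified, is actually false in the generalized setting and needs to be corrected before the argument closes.
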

For example, if $D$ is the $(3, 2)$-Dyck
path in Figure~\ref{fig:extdyckgraph}, then
\begin{equation*}
p(D) = (x_1 - x_4)^2 (x_1 - x_5)^3 (x_2 - x_5)^3 (x_3 - x_5)^3 (x_4 - x_5)
\end{equation*}
and
\begin{equation*}
\phi^{(3,2)}(p(D)) = x_1^5 x_2^3 x_3^3 x_4^1 + 
\text{terms whose exponent partitions are $\prec (5,3,3,1)$}.
\end{equation*}

Lemma~\ref{extendedtriangularity} implies that $\phi^{(\ell, m)}$ is
surjective, and dimension counting implies that $\phi^{(\ell, m)}$ is
a graded $S_n$-module homomorphism.  The $S_n$-isomorphism in
Theorem~\ref{extension} follows.

To prove the remainder of Theorem~\ref{extension}, we need to show
that $V_n^{(\ell,m)}(k) = \Sym^k(V_n^{(\ell,m)}(1))$ for $k < n$. This
follows at once from Lemma~\ref{lem:tutteCoeff}, since the Hilbert
series of $V_n^{(\ell,m)}$ is the Tutte polynomial
evaluation $$q^{\ell\binom{n}{2} +
  mn-n}T_{K_{n+1}^{(\ell,m)}}(1,1/q).$$ For this see \cite{PostnikovShapiro}. For the statement about the
top degree, we take the elements in $V^{(\ell,\ell)}_n(\textup{top})$,
divide them all by $ \prod_{1 \leq i < j \leq n+1} (x_i-x_j)^{\ell
  -1}, $ which yields an equivariant isomorphism with
$V_n(\textup{top}) \otimes \sign^{\otimes (\ell -1)}$. By
Theorem~\ref{extremedegrees} this is $\Lie_n \otimes \sign^{\otimes
  \ell}$. The remainder of Theorem~\ref{extension} is now proved.
\end{proof}

\section{Concluding Remarks}
\label{Concluding Remarks}

In this paper we constructed a graded $S_{n+1}$-module $V_n$ which satisfies 
$\Res^{S_{n+1}}_{S_n}(V_n) \cong_{S_n} \Park_n$.  
While we know the $S_{n+1}$-structure of $V_n$ in extreme degrees, the full
$S_{n+1}$-structure remains unknown.

\begin{problem}
\label{extendedfrobeniuscharacter}
Give a nice expression for the graded $S_{n+1}$-Frobenius character of
$V_n$.
\end{problem}

Problem~\ref{extendedfrobeniuscharacter} may have an answer in terms
of free Lie algebras.  Let $\mathsf{Lie}_{n+1}$ be the free Lie
algebra on the generators $x_1, \dots, x_{n+1}$.  The group $S_{n+1}$
acts on $\mathsf{Lie}_{n+1}$ by subscript permutation.  By keeping
track of the multiplicities of the $x_i$, the $S_{n+1}$-module
$\mathsf{Lie}_{n+1}$ carries the structure of an
$\mathbb{N}^{n+1}$-graded vector space.  The $(1, \dots, 1)$-component
of this vector space is stable under the action of $S_{n+1}$ and is
known to carry the Lie representation, or $V_n(\mathrm{top}) \otimes
\mathrm{sign}$.  Lower degrees of $V_n$ may also embed naturally
inside free lie algebras.

In this paper we showed that the action of $S_n$ on $\Park_n$ extends to a graded
module $V_n(k)$ of $S_{n+1}$.  
We identified the top degree $V_n(\mathrm{top})$ of this extended action
with the twisted Lie representation $\Lie_n \otimes \sign$ of
$S_{n+1}$.  Whitehouse \cite{Whitehouse} proved that the
representation $\Lie_n$ extends to $S_{n+2}$.  
This suggests the
following problem.

\begin{problem}
\label{extend-two}
What is the maximum value of $r$ for which the action of $S_n$ on $\Park_n$ extends
to an action of $S_{n+r}$?  
For fixed $n$ and $k$, what is the maximum value of $r$ for which the action of $S_{n+1}$
on $V_n(k)$ extends to $S_{n+r}$?
\end{problem}

We have some computer evidence 
(see \cite{B} for the relevant Mathematica code)
that the value of $r$ in the first question may be greater than $1$ for
any $n$.  
For $n = 5, 6, 7$, the action of $S_n$ on $\Park_n$ extends to an action of $S_{10}$.  However,
the action of $S_5$ on $\Park_5$ does not extend to an action of $S_{11}$.

Any answer to the second question will depend on both $n$ and $k$.  By 
Whitehouse's result, for any $n > 0$, the $k$-value $k = {n \choose 2}$ gives rise to an
extension degree $r$ of at least two.  Also, since $V_n(0)$ is the trivial representation of
$S_{n+1}$ for any $n$, if $k = 0$ one can take $r = \infty$.  On the other hand, if $k = 1$ we have
that $V_n(1)$ is the reflection representation of $S_{n+1}$.  For $n > 3$, this representation is not 
the restriction of any $S_{n+2}$ module.

The results of this paper and that of Whitehouse \cite{Whitehouse} motivate the following problem
which in the opinion of the authors has received surprisingly little attention.

\begin{problem}
\label{does-it-extend}
Let $M$ be an $S_n$-module.  Give a nice criterion for deciding whether $M$ extends to a 
representation of $S_{n+1}$.
\end{problem}

Very few irreducible representations of $S_n$ extend to $S_{n+1}$.  Indeed,
if $S^{\lambda}$ is the irreducible representation of $S_n$ labeled by a partition
$\lambda \vdash n$, then $S^{\lambda}$ extends to $S_{n+1}$ if and only 
if $\lambda$ is a `near rectangle', i.e. a rectangular partition with $n+1$ boxes minus its outer corner.

On the other hand, an `asymptotically nonzero fraction' of
$S_n$-modules extend to $S_{n+1}$.  More precisely, recall that the
$\mathbb{Z}$-module $R_n$ of class functions on $S_n \rightarrow
\CC$ has basis given by the set of irreducible characters $\{
S^{\lambda} \,:\, \lambda \vdash n \}$ (where we identify modules with
characters).  The $\mathbb{Z}$-linear map $\psi: R_{n+1} \rightarrow R_n$ induced
by restriction is surjective.  Indeed, if $\lambda = (\lambda_1,
\dots, \lambda_k) \vdash n$, form a new partition $\lambda^+ :=
(\lambda_1 + 1, \lambda_2, \dots, \lambda_k) \vdash n+1$ by increasing
the first part of $\lambda$ by one.  By the branching rule for 
symmetric groups, the restriction
$\Res^{S_{n+1}}_{S_n}(S^{\lambda^+})$ has the form
\begin{equation*}
\Res^{S_{n+1}}_{S_n}(S^{\lambda^+}) \cong_{S_n} S^{\lambda} \oplus \cdots ,
\end{equation*}
where the elipsis denotes a direct sum of irreducibles corresponding
to partitions $> \lambda$ in the dominance order. The surjectivity of $\psi$ follows.  

On the level of representations, the fact that $\psi$ is surjective
means that the set $C_n$ of $S_n$-modules which extend to $S_{n+1}$
forms a full rank cone within the integer cone of representations of
$S_n$.  One way to interpret Problem~\ref{does-it-extend} would be to
describe the extremal rays and/or facets of $C_n$.  Identifying
representations of $S_n$ with points in $\mathbb{N}^{p(n)}$, where
$p(n) = \#\{ \lambda \,:\, \lambda \vdash n \}$ is the partition
number, we could also ask for the size of $C_n$ by asking for the
limit $\lim_{m \rightarrow \infty} \frac{ \#(C_n \cap \{0, 1, \dots,
  m\}^{p(n)})}{(m+1)^{p(n)}}$.  The fact that $\psi$ is surjective
means that this limit is nonzero, but we have no conjecture as to its
value.

Since $\psi$ is surjective, every representation of $S_n$ is a
restriction of a virtual $S_{n+1}$-module. In a slightly different
direction, one could ask that a sufficiently large multiple of a
representation extend. For arbitrary $m \leq n$, there is some integer
$M$ such that $\CC[S_m]^{\oplus M}$ extends to an $S_{n+1}$-module, by
a result of Donkin \cite{donkin}. Indeed, take the canonical embedding
\[S_m \subset S_{n+1} \subset\operatorname{GL}_{n+1}(\CC)\] in the
canonical way. Donkin asserts that there is a finite dimensional
rational $\operatorname{GL}_{n+1}(\CC)$-module $V$ such that
$\Res^{\operatorname{GL}_{n+1}(\CC)}_{S_n} V \approx \CC[S_n]^{\oplus
  m}$. It follows that $\Res^{\operatorname{GL}_{n+1}(\CC)}_{S_{n+1}}
V$ is the desired extension.

Problem~\ref{does-it-extend} is unsolved even for the coset
representations $M^{\lambda}$.  Not all of these representations
extend (if they did, then any direct sum of coset representations such
as the parking representation would extend automatically).  For
example, the representation $M^{(3,2,2)}$ of $S_7$ does not extend to
a representation of $S_8$ (as can be checked by computer using \cite{B}).  
However, the representation $M^{\lambda}$
of $S_n$ extends to a representation of $S_{n+1}$ for all $\lambda
\vdash n$ and $0 \leq n \leq 6$.

Many  variations on Problem~\ref{does-it-extend} are
possible. One could ask for a nice way of
determining the greatest integer $k$ such that an $S_n$-module $M$
extends to $S_{n+k}$.  

Also, one could ask whether a given {\it permutation} representation of $S_n$ extends to $S_{n+1}$.  
 For $n \leq 5$, the permutation action of $S_n$ on $\Park_n$ extends to 
 a permutation action of $S_{n+1}$; we are not 
 sure whether this permutation action extends in general.  A seeming difficulty with this question is that one
 would {\it a priori} need to consider restrictions to $S_n$ of the action of $S_{n+1}$ on the set 
 of cosets $S_{n+1} / G$ for {\it any} subgroup $G$ of $S_{n+1}$.  
 
 A more combinatorial `permutation' version of 
 Problem~\ref{does-it-extend} 
 can be obtained by asking which permutation representations of $S_n$ are restrictions
 of permutation representations of $S_{n+1}$ which are direct sums of the modules
 $\{M^{\lambda} \,:\, \lambda \vdash n+1 \}$.  The parking representation $\Park_4$ does not satisfy this property.
 To see this, one uses the fact
 that for $\lambda \vdash n+1$, 
 $\Res^{S_{n+1}}_{S_n}(M^{\lambda}) = \bigoplus_{\mu} M^{\mu}$, where $\mu$ ranges over all partitions
 obtained by subtracting $1$ from any nonzero part of $\lambda$ and sorting the resulting sequence into
 weakly decreasing order.  It is a direct computation that 
 $\Park_4 = M^{(1,1,1,1)} \oplus 6 M^{(2,1,1)} \oplus 2 M^{(2,2)} \oplus 4 M^{(3,1)} \oplus M^{(4)}$
 is not a $\mathbb{N}$-linear combination of the seven modules
 $\{ \Res^{S_5}_{S_4}(M^{\lambda}) \,:\, \lambda \vdash 5 \}$.
 
 Problem~\ref{does-it-extend} could also be interesting
in positive characteristic or for towers of linear or Weyl groups
other than symmetric groups.

\section{Acknowledgements}
\label{sec:ack}

B. Rhoades was partially supported by the NSF grant DMS-1068861.
A. Berget was partially supported as VIGRE Fellow at UC Davis by NSF
grant DMS-0636297.

The authors thank Igor Pak for early discussions on determining the
$S_{n+1}$-module structure of $V_n$ and John Shareshian for several
discussions on extendability of representations.  The authors also thank
an anonymous referee for pointing out a (corrected) flaw
in Lemma~\ref{triangularity}.

\end{document}